\let\@@enum@org\@@enum@
\def\@@enum@[#1]{\@@enum@org[\normalfont #1]}
\def\co{\colon\thinspace}
\DeclarePairedDelimiter{\form}{\langle}{\rangle}
\DeclarePairedDelimiter{\abss}{\lVert}{\rVert}
\newcommand\PSL{\operatorname{PSL}}
\newcommand\ba{\begin{align*}}
\newcommand\ea{\end{align*}}
\newcommand\be{\begin{enumerate}}
\newcommand\ee{\end{enumerate}}
\newcommand\bp{\begin{proof}}
\newcommand\ep{\end{proof}}
\newcommand\bpp{\begin{prop}}
\newcommand\epp{\end{prop}}
\newcommand\bpb{\begin{prob}}
\newcommand\epb{\end{prob}}
\newcommand\bd{\begin{defn}}
\newcommand\ed{\end{defn}}
\newcommand\bh{\begin{hint}}
\newcommand\eh{\end{hint}}
\numberwithin{equation}{section}
\newcommand\bC{\mathbb{C}}
\newcommand\bN{\mathbb{N}}
\newcommand\bR{\mathbb{R}}
\newcommand\bQ{\mathbb{Q}}
\newcommand\bZ{\mathbb{Z}}
\newcommand\VV{\mathcal{V}}
\newcommand\UU{\mathcal{U}}
\newcommand\Hom{\operatorname{Hom}}
\newcommand\Mod{\operatorname{Mod}}
\DeclareMathOperator\Homeo{Homeo}
\newcommand\yt{\widetilde}
\newcommand\sse{\subseteq}
\DeclareMathOperator\Out{Out}
\DeclareMathOperator\Aut{Aut}
\DeclareMathOperator\Diff{Diff}
\def\thetitle{Small $C^1$ actions of semidirect products on compact manifolds}
\def\theauthors{{THE AUTHORS HERE}}
\newtheorem{thm}{Theorem}[section]
\newtheorem{lem}[thm]{Lemma}
\newtheorem{cor}[thm]{Corollary}
\newtheorem{prop}[thm]{Proposition}
\newtheorem{que}[thm]{Question}
\newtheorem*{claim*}{Claim}
\newtheorem*{assumption*}{Assumption}
\theoremstyle{remark}
\newtheorem*{hint}{Hint}
\theoremstyle{definition}
\newtheorem{defn}[thm]{Definition}
\newtheorem{prob}{Problem}[section]
\begin{document}
\title{\thetitle}
\date{\today}
\keywords{Groups acting on manifolds, hyperbolic dynamics, fibered $3$--manifold, $C^1$--close to the identity}

\author[C. Bonatti]{Christian Bonatti}
\address{CNRS \& Institut de Math\'ematiques de Bourgogne (IMB, UMR CNRS 5584),
Universit\'e Bourgogne Franche--Comt\'e,
9 av.~Alain Savary,
21000 Dijon, France.}
\email{Christian.Bonatti@u-bourgogne.fr}
\urladdr{http://bonatti.perso.math.cnrs.fr}

\author[S. Kim]{Sang-hyun Kim}
\address{School of Mathematics, Korea Institute for Advanced Study (KIAS), Seoul, 02455, Korea}
\email{skim.math@gmail.com}
\urladdr{http://cayley.kr}

\author[T. Koberda]{Thomas Koberda}
\address{Department of Mathematics, University of Virginia, Charlottesville, VA 22904-4137, USA}
\email{thomas.koberda@gmail.com}
\urladdr{http://faculty.virginia.edu/Koberda}

\author[M. Triestino]{Michele Triestino}
\address{Institut de Math\'ematiques de Bourgogne (IMB, UMR CNRS 5584),
Universit\'e Bourgogne Franche--Comt\'e,
9 av.~Alain Savary,
21000 Dijon, France.}
\email{Michele.Triestino@u-bourgogne.fr}
\urladdr{http://mtriestino.perso.math.cnrs.fr}

\begin{abstract}
Let $T$ be a compact fibered $3$--manifold, presented as a mapping torus of a compact, orientable surface $S$ with monodromy $\psi$,
and let $M$ be a compact Riemannian manifold.
Our main result is that if the induced action $\psi^*$ on $H^1(S,\bR)$ has no eigenvalues on the unit
circle, then there exists a neighborhood $\mathcal U$ of the trivial action in the space of $C^1$ actions of $\pi_1(T)$ on $M$ such that any
action in $\mathcal{U}$ is abelian. We will prove that the same result holds in the generality of an infinite cyclic extension
of an arbitrary finitely generated group $H$, provided that the conjugation action of the cyclic group on $H^1(H,\bR)\neq 0$ has no
eigenvalues of modulus one.
We thus generalize a result of A. McCarthy, which addressed the case of abelian--by--cyclic groups
acting on compact manifolds.

\smallskip
{\noindent\footnotesize \textbf{MSC\textup{2020}:} Primary 37C85. Secondary 20E22, 57K32.}
\end{abstract}

\maketitle


\section{Introduction}

In this paper, we consider smooth actions of finitely generated--by--cyclic groups on compact manifolds, motivated by the study
of fibered hyperbolic $3$--manifold groups. We let $S$ be a compact, orientable surface of
negative Euler characteristic, possibly with boundary. Thus, the fundamental group $\pi_1(S)$ is either a finitely generated free group, or the fundamental group
of a closed surface of genus $g$ for some $g\geq 2$. If $\phi\in\Homeo(S)$ is a (possibly orientation reversing)
homeomorphism, then we may form $T=T_{\phi}$,
the mapping torus of $\phi$. We have that the fundamental group $\pi_1(T)$ fits into a short exact sequence of the form \[1\to \pi_1(S)\to\pi_1(T)\to\bZ\to 1,\]
where the conjugation action of $\bZ$ on $\pi_1(S)$ is by the induced action of $\phi$. It is well--known that,
up to an inner automorphism of $\pi_1(S)$, this action depends only
on the homotopy class of $\phi$, and is therefore an invariant of the (extended) mapping class of $\phi$. It follows that the
 isomorphism type of $\pi_1(T)$ depends only
on the mapping class of $\phi$. 

We will be particularly interested in the action $\phi^*$ on  the real cohomology of the fiber, and especially
in the case where the induced map $\phi^*:H^1(S,\bR)\to H^1(S,\bR)$ is hyperbolic.
Here, the induced automorphism $\phi^*$ is said to be \emph{hyperbolic} if $H^1(S,\bR)\neq 0$ and if every eigenvalue of
$\psi^*$ has modulus different from one. More generally, an automorphism of a nonzero, finite dimensional real vector space is hyperbolic
if it has no eigenvalues of modulus one.

Examples of fibered $3$--manifolds with  hyperbolic monodromies
include all compact $3$--manifolds admitting sol geometry, as well as many fibered  hyperbolic manifolds. For example, the figure eight knot
complement fibers over the circle with  a punctured  torus as the fiber, and the monodromy given by an automorphism acting hyperbolically
on  the homology  of the torus.

Fibered $3$--manifold groups  arising  from mapping classes acting hyperbolically on the homology of the fiber
 fall into a much larger class of groups which we will be able to investigate with our methods. 
Here and throughout, we will let $M$ be a compact Riemannian manifold. 
Recall that a short exact sequence of finitely generated groups 
\[1\to H\to G\to\bZ\to 1\]
naturally determines $\psi\in \Out(H)$, and hence, induces a unique linear automorphism $\psi^*$ of $H^1(H,\bR)$.
Abstractly as groups, we have that $G$ is isomorphic to the semidirect product \[G\cong H\rtimes_{\psi} \bZ,\] where the outer automorphism $\psi$ is given by the conjugation
action of $\bZ\cong G/H$ on $H$. 
We will study $\Hom(G,\Diff^1(M))$, the space of $C^1$ actions of $G$ on $M$, in the case that $\psi^*$ is hyperbolic.

\subsection{Main result}
We will use the symbol $1$ to mean the identity map, the trivial group, the identity group element or the real number 1 depending on the context, as this will not cause confusion.  
The principal result of this paper is the following:

\begin{thm}\label{thm:main}
Suppose we have a short exact sequence of finitely generated groups
\[1\to H\to G\to\bZ\to 1,\]
which induces a hyperbolic automorphism $\psi^*$ of $H^1(H,\bR)$. Then there exists a neighborhood $\UU\sse\Hom(G,\Diff^1(M))$ of the trivial representation such that $\rho(H)=1$ for all representations $\rho\in\UU$.
\end{thm}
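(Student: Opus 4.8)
The plan is to exploit the hyperbolicity of $\psi^*$ to force the image of $H$ in the commutator subgroup of $\rho(G)$, and then to use a Zassenhaus-type argument in $\Diff^1(M)$ to conclude that a $C^1$-small subgroup which is generated by commutators of small elements is forced to be trivial. First I would pick a finite generating set for $G$; since $G$ is finitely generated and $G/H\cong\bZ$, I may choose a generating set consisting of finitely many elements $h_1,\dots,h_k\in H$ together with a single element $t\in G$ mapping to a generator of $\bZ$. Conjugation by $t$ realizes the automorphism $\psi$ of $H$ (up to inner automorphisms, which does not affect the action on $H^1(H,\bR)$), and the hypothesis is that the induced $\psi^*$ on $V:=H^1(H,\bR)$ has no eigenvalue of modulus one.

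The heart of the matter is the following algebraic observation: if $N\trianglelefteq G$ is a normal subgroup such that $H/N\cap H$ is abelian, then $\psi$ descends to an automorphism $\bar\psi$ of the abelian group $H/(N\cap H)$, and the corresponding linear map on $(H/(N\cap H))\otimes\bR$ is a quotient of $\psi^*$ — hence it too is hyperbolic, in particular it has no nonzero fixed vector unless the group is zero. Concretely, apply this with $N=\ker\rho$: for $\rho$ in a small enough neighborhood of the trivial representation, $\rho(G)$ is a $C^1$-small subgroup of $\Diff^1(M)$, and by the (Zassenhaus/Jordan-type) structure of such groups — which I expect to have been set up in the earlier part of the paper, e.g. that a sufficiently $C^1$-small finitely generated subgroup of $\Diff^1(M)$ is nilpotent, or at least metabelian with controlled lower central series — the image $\rho(H)$ lies in a nilpotent group. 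Then the abelianization $\rho(H)^{\mathrm{ab}}$ receives a surjection from $H^{\mathrm{ab}}$, and tensoring with $\bR$ the automorphism induced by $\rho(t)$-conjugation is a quotient of $\psi^*$, hence hyperbolic; but $\rho(t)$-conjugation on the finitely generated abelian group $\rho(H)^{\mathrm{ab}}$ is an automorphism of a finitely generated $\bZ$-module whose real points carry a hyperbolic linear map — iterating it in both directions together with the $C^1$-smallness (so that all iterates $\rho(t)^n x \rho(t)^{-n}$ of a fixed generator stay in a fixed compact neighborhood of the identity) forces, via the expanding/contracting splitting of $\psi^*$, that $x$ is trivial in $\rho(H)^{\mathrm{ab}}\otimes\bR$, i.e. has finite order.

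To upgrade "finite order in the abelianization" to "trivial", I would iterate the argument down the lower central series of $\rho(H)$: each successive quotient $\gamma_i(\rho(H))/\gamma_{i+1}(\rho(H))$ is a finitely generated abelian group on which $\rho(t)$-conjugation acts through a map compatible with (a tensor power of) $\psi^*$, hence again with no modulus-one eigenvalues on its real points, so the same contraction argument kills the free part; combined with the nilpotency bound, finitely many steps exhaust $\rho(H)$ up to torsion, and the torsion is controlled because a $C^1$-small subgroup of $\Diff^1(M)$ has no small torsion — any finite-order element sufficiently $C^1$-close to the identity is the identity (this is where compactness of $M$ and the $C^1$ topology are used, via the standard averaging/linearization argument). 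Hence $\rho(H)=1$, and since this holds on a fixed neighborhood $\UU$ determined only by the generating data and the Zassenhaus constants for $\Diff^1(M)$, the theorem follows.

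I expect the main obstacle to be making the "$C^1$-small subgroups are (virtually) nilpotent with uniformly controlled lower central series" input precise and uniform over the neighborhood $\UU$: one needs that the length of the lower central series and the relevant commutator estimates depend only on $M$ (and perhaps $\dim M$) and not on the particular representation, so that a single $\UU$ works. The second delicate point is keeping track of iterates: the contraction argument requires $\rho(t)^n g\rho(t)^{-n}$ to remain in a region where the nilpotent/linear approximation is valid for \emph{all} $n\in\bZ$ simultaneously, which must be arranged by shrinking $\UU$ using the hyperbolic splitting of $\psi^*$ (there exist constants $\lambda>1$ and $C$ with $\|(\psi^*)^{\pm n}|_{E^{\mp}}\|\le C\lambda^{-n}$) before the dynamics can be run. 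Once these uniformities are in hand, the descent through the central series and the no-small-torsion fact close the argument.
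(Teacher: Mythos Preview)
Your proposal has a genuine gap: the ``Zassenhaus-type'' input you rely on --- that a sufficiently $C^1$-small finitely generated subgroup of $\Diff^1(M)$ is nilpotent (or even metabelian) with uniformly bounded lower central series --- is simply false, and the paper does not provide it. In fact the paper quotes the opposite phenomenon: by the Bonatti--Rezaei theorem (Theorem~\ref{thm:br2019}), every finitely generated residually torsion--free nilpotent group, in particular every nonabelian free group, admits faithful $C^1$ actions on $[0,1]$ arbitrarily $C^1$-close to the identity. So $C^1$-smallness of $\rho(G)$ places no nilpotency constraint on $\rho(H)$ whatsoever, and your descent through the lower central series never gets started. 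Your auxiliary claim that ``any finite-order element sufficiently $C^1$-close to the identity is the identity'' is also false already on $S^1$: the rotation by $1/q$ has order $q$ and is as $C^1$-close to the identity as one likes. Finally, the step ``all iterates $\rho(t)^n g\rho(t)^{-n}$ stay in a fixed small neighborhood'' is not justified either: $\rho(t)^n\rho(h_i)\rho(t)^{-n}=\rho(\psi^n(h_i))$, and $\psi^n(h_i)$ is an enormous word in the generators, so smallness of $\rho$ on generators says nothing about it.

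The paper's actual argument avoids all of this by never reasoning about the algebraic structure of $\rho(G)$. Instead it works pointwise with the \emph{displacement vectors} $\Delta_x(s)=\rho(s)(x)-x\in\bR^N$ of the homological generators $S_0$ of $H$, and uses the approximate linearization lemma (Lemma~\ref{lem:bonatti}, a Thurston--stability estimate) to show that, up to a small error, $\Delta_{t^{-1}x}(S_0)\approx A\,\Delta_x(S_0)$ where $A$ is the matrix of $\psi^*$ (Lemma~\ref{l:mccarthy}). One then takes a point $x\in M$ maximizing $\|\pi_+\Delta_x(S_0)\|$ over the compact $M$; hyperbolicity of $A$ forces $\|\pi_+\Delta_{t^{-1}x}(S_0)\|>\|\pi_+\Delta_x(S_0)\|$, a contradiction unless all displacements vanish. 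The hyperbolic dynamics is thus applied to a genuinely finite-dimensional linear object (the displacement matrix), not to the group $\rho(H)$ itself; this is what makes the argument go through where a Margulis--Zassenhaus strategy cannot.
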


Thus, sufficiently small actions of $G$ on compact manifolds necessarily factor through cyclic groups, provided the automorphism defining the extension $G$ is hyperbolic on cohomology. The reader is directed to Subsection~\ref{subsec:space} for a discussion of the topology on $\Hom(G,\Diff^1(M))$. Theorem~\ref{thm:main} may be viewed as an analogue of a result of A.~McCarthy~\cite{McCarthy}, who proved a statement with the same conclusion for abelian--by--cyclic groups (the fundamental groups of compact 3--manifolds admitting sol geometry fall in this class).

For certain manifolds and with certain natural hypotheses, abelian--by--cyclic group actions by diffeomorphisms enjoy rather strong rigidity properties~\cite{HurtXue19}.

Note that if $H$ is a left-orderable group then it is not difficult to find faithful actions of $G$ by homeomorphisms of the interval $[0,1]$
which are arbitrarily $C^0$--close to the identity, so the $C^1$ regularity assumption in Theorem~\ref{thm:main} is essential.

By applying Theorem~\ref{thm:main} to the above short exact sequence for a fibered 3-manifold group, we obtain the following result.

\begin{cor}\label{cor:main-3M}
Let $S,\phi$, and $T$ be as above. If $\phi$ induces a hyperbolic automorphism of $H^1(S,\bR)$, then there exists a neighborhood
$\mathcal{U}$ of the trivial representation in $\Hom(\pi_1(T),\Diff^1(M))$ such that $\rho(H)=1$ for all representations $\rho\in\UU$.
\end{cor}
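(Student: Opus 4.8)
The plan is to exploit the fact that a $C^1$ action very close to the identity has, near each point of $M$, a well-defined "linear part" which is $C^0$-close to the identity map on each tangent space, and to run a fixed-point / contraction argument on the $\bZ$-direction against the hyperbolicity of $\psi^*$. First I would set up the derivative cocycle: given $\rho\in\Hom(G,\Diff^1(M))$ with $\rho(G)$ contained in a small $C^1$-neighborhood $\UU$ of the identity, the map $g\mapsto D\rho(g)$ is a continuous cocycle over the action of $G$ on $M$ taking values in $\GL$ of the tangent spaces, and because the action is $C^1$-small, every $D\rho(g)_x$ lies in a prescribed small neighborhood of the identity matrix (in particular, all of its eigenvalues are close to $1$, and $\log D\rho(g)_x$ is defined and small). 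The key observation is that the abelianization functor is compatible with this: passing to the "average of the logarithm of the derivative", or more robustly to a suitable conjugation-invariant function of $D\rho$, produces a homomorphism-like object on $G$ whose restriction to $H$ must be forced to vanish by the algebraic hypothesis.

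More concretely, the core step is the following. Fix a generator $t\in G$ mapping to $1\in\bZ$; then conjugation by $t$ induces $\psi$ on $H$ and $\psi^*$ on $H^1(H,\bR)$. For each $h\in H$ and each $\rho\in\UU$, consider the class in $H^1(H,\bR)$ (or in an appropriate finite-dimensional vector space built from the commutator subgroup) obtained by linearizing $\rho(h)$; call it $\ell_\rho(h)$. One checks that $\ell_\rho$ is additive on $H$ modulo terms that are quadratically small in the size of the neighborhood $\UU$, so after rescaling we may treat $\ell_\rho$ as an honest element $v_\rho$ of $H^1(H,\bR)$, well-defined up to an error we can make as small as we like. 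The relation $t h t^{-1} = \psi(h)$ then forces $v_\rho = \psi^*(v_\rho) + (\text{small error})$, i.e. $v_\rho$ lies within $\varepsilon$ of the fixed subspace of $\psi^*$; since $\psi^*$ is hyperbolic, $\Fix(\psi^*)=0$, so $v_\rho$ is within $\varepsilon$ of $0$. Iterating the relation (or running the standard argument that a hyperbolic linear map has no nonzero nearly-fixed vectors, via the splitting into expanding and contracting subspaces) pins $v_\rho$ to be exactly $0$ once $\UU$ is small enough, and in fact shows $\rho(h)$ has trivial linear part everywhere. Feeding this back into the $C^1$-smallness, one concludes that $\rho(h)$ must be the identity: a $C^1$-small diffeomorphism all of whose iterates stay $C^1$-small and which is a product of commutators in a hyperbolically-controlled way is rigid enough to be forced to $1$.

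Carrying this out for all $h$ in a generating set of $H$ gives $\rho(H)=1$, which is exactly the assertion of Theorem~\ref{thm:main}; Corollary~\ref{cor:main-3M} is then immediate by taking $H=\pi_1(S)$, $G=\pi_1(T)$, and $\psi=\phi_*$, noting that $\phi$ inducing a hyperbolic automorphism of $H^1(S,\bR)$ is precisely the hypothesis that $\psi^*$ is hyperbolic, and that $H^1(S,\bR)\neq 0$ since $S$ has negative Euler characteristic.

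The step I expect to be the genuine obstacle is promoting the approximate, error-laden linearization statement ($v_\rho$ is $\varepsilon$-close to $0$) to the exact conclusion $\rho(h)=1$, rather than merely "$\rho(h)$ has derivative close to the identity". The naive averaging of $\log D\rho$ is not literally a homomorphism, and the quadratic error terms do not obviously telescope; the real work will be to find the correct invariant — likely an honest $C^1$-dynamical argument using that $\rho(t)$ acts on the space of possible linearizations $\rho(h)$ by something $C^0$-close to the hyperbolic map $\psi^*$, so that the only $\rho(t)$-"invariant" small diffeomorphism-valued cocycle over $H$ is the trivial one. This is where one must be careful that $M$ is compact (so that $C^1$-small genuinely controls the cocycle uniformly) and that $H$ is finitely generated (so finitely many relations and generators suffice, and the errors can be absorbed uniformly). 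Orientation-reversing $\phi$ causes no difficulty, since we only use the induced map on $H^1$, which may have negative determinant but is still hyperbolic by hypothesis.
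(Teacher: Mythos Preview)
Your final paragraph is exactly right and matches the paper: Corollary~\ref{cor:main-3M} is nothing more than Theorem~\ref{thm:main} applied to the short exact sequence $1\to\pi_1(S)\to\pi_1(T)\to\bZ\to 1$, and the paper says precisely this. So as a proof of the \emph{corollary}, you are done once Theorem~\ref{thm:main} is available.

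The bulk of your proposal, however, is an attempted sketch of Theorem~\ref{thm:main} itself, and there the outline has a genuine gap and one wrong choice of instrument. First, the paper does \emph{not} work with the derivative cocycle $D\rho(g)$ or logarithms thereof; the linearization is zeroth-order, via displacement vectors $\Delta_x(g)=\rho(g)(x)-x\in\bR^N$ (after embedding $M\hookrightarrow\bR^N$). The $C^1$ hypothesis enters only through Bonatti's approximate linearization lemma (a form of Thurston stability), which says that for $C^1$-small diffeomorphisms the displacement of a word is approximately the signed sum of the displacements of the letters. This is what makes $\Delta_x$ behave like a homomorphism up to controlled error, and it is the ``correct invariant'' you were groping for at the end.

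Second, and more seriously, your $v_\rho$ is a single vector in $H^1(H,\bR)$, and you correctly anticipate that the approximate relation $v_\rho\approx\psi^*v_\rho$ does not by itself force $v_\rho=0$. The paper resolves this not by a fixed-point argument but by keeping the dependence on the basepoint $x\in M$: one proves (McCarthy's lemma in this setting) that $\Delta_{t^{-1}x}(S_0)\approx A\,\Delta_x(S_0)$, where $A$ is the matrix of $\psi^*$ and $S_0$ is a homological basis. Then one chooses $x$ to maximize the unstable projection $\|\pi_+\Delta_x(S_0)\|$ over the \emph{compact} manifold $M$; hyperbolicity of $A$ forces $\|\pi_+\Delta_{t^{-1}x}(S_0)\|$ to be strictly larger, contradicting maximality unless all displacements vanish. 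This maximum-over-$M$ argument is the missing mechanism that converts ``$\varepsilon$-close to zero'' into ``exactly zero,'' and it is absent from your sketch.
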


The hypotheses in Theorem~\ref{thm:main} may be contrasted with the following result of Bonatti--Rezaei~\cite{br2019},
which generalizes some work
of Farb--Franks~\cite{FF2003} and Jorquera~\cite{Jorquera}, and is closely related to results of Navas~\cite{NavasComp14} and
Parkhe~\cite{Parkhe16}.

\begin{thm}[Bonatti--Rezaei]\label{thm:br2019}
Every finitely generated, residually torsion--free nilpotent group $G$ admits a faithful representation $\rho\colon G\to\Diff^1([0,1])$
that is $C^1$--close to the identity.
\end{thm}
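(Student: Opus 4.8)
The plan is to build the representation in two stages: an analytic \emph{building block} supplying $C^1$-small faithful actions of finitely generated torsion-free nilpotent groups, and a \emph{residual assembly} that glues these blocks along an exhaustion of $[0,1]$. Since every finitely generated torsion-free nilpotent group embeds, by Mal'cev's theorem, into some integral unitriangular group $\UT(n,\bZ)\sse\UT(n,\bR)$, it suffices for the first stage to treat the groups $\UT(n,\bR)$, where the presence of the ambient Lie group lets us use one-parameter flows.

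For the building block I would prove, by induction on $n$, the following statement: for every $\epsilon>0$ and every compact subinterval $I\sse[0,1]$ there is a faithful action of $\UT(n,\bR)$ on $I$ by $C^1$ diffeomorphisms, fixing and tangent to the identity at $\partial I$, whose finitely many standard generators $g$ satisfy $\|\rho(g)-\Id\|_{C^1}<\epsilon$. The base case $n=2$ is the abelian group $\bR$, realized by the time-$t$ maps of a slow flow supported in the interior of $I$. For the inductive step I would exploit the decomposition $\UT(n,\bR)\cong\bR^{n-1}\rtimes\UT(n-1,\bR)$, in which the normal abelian factor is the last column and the conjugation action of the quotient is \emph{unipotent} --- precisely the non-hyperbolic regime that Theorem~\ref{thm:main} forbids. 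Concretely, I would place the new abelian generators on a nested family of subintervals that the inductively constructed $\UT(n-1,\bR)$-action translates among one another, so that the iterated commutators realize the successively deeper terms of the lower central series; the $C^1$ sizes are controlled stage by stage and driven below $\epsilon$.

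For the assembly, residual torsion-free nilpotence of the finitely generated group $G$ furnishes a descending chain of normal subgroups $G=N_0\supseteq N_1\supseteq\cdots$ with each quotient $G/N_k$ finitely generated torsion-free nilpotent and $\bigcap_k N_k=1$ (take $N_k$ to be the intersection of finitely many kernels separating the first $k$ nontrivial elements of $G$). Fixing disjoint subintervals $I_k\sse[0,1]$ accumulating at the endpoint $1$, I would place on each $I_k$ the building-block action of $G/N_k$ with $C^1$ size $\delta_k\to 0$, and let $\rho(g)$ act on $I_k$ through the image of $g$ in $G/N_k$ while fixing the accumulation point. Faithfulness is immediate: if $g\neq 1$ then $g\notin N_k$ for some $k$, so $\rho(g)$ moves points of $I_k$. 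The map $\rho(g)$ is $C^1$ on the interior of each block, equals $\Id$ on the gaps, and is tangent to the identity at each $\partial I_k$, so it is $C^1$ across all gluings; since $|\rho(g)'(x)-1|\le\delta_k$ on $I_k$ with $\delta_k\to 0$, its derivative extends continuously by the value $1$ at the accumulation point, whence $\rho(g)\in\Diff^1([0,1])$, and the same bound makes the generators as $C^1$-close to the identity as desired.

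The main obstacle is the inductive step of the building block: one must keep the commutators \emph{simultaneously} nontrivial --- they are exactly the higher unitriangular generators whose survival guarantees faithfulness of the nilpotent block --- and $C^1$-close to the identity. Since the commutator of two maps that are $C^1$-near the identity is an order of magnitude nearer the identity, the supports and derivative profiles of the generators must be designed so that the deeper lower-central-series elements retain controlled, nonvanishing $C^1$-behaviour in the interior while the whole action stays tangent to the identity at $\partial I$. Balancing this nontriviality against $C^1$-flatness --- and in particular seeing why $C^1$, rather than higher, regularity is the natural target --- is the delicate analytic heart of the argument.
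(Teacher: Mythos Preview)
The paper does not contain a proof of this theorem. Theorem~\ref{thm:br2019} is quoted as a result of Bonatti--Rezaei~\cite{br2019} (building on Farb--Franks~\cite{FF2003} and Jorquera~\cite{Jorquera}) and is used as a black box, for instance in the proof of Proposition~\ref{prop:virtually special}. There is therefore nothing in the present paper to compare your proposal against.

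For what it is worth, the strategy you outline --- embedding each finitely generated torsion-free nilpotent quotient into some $\UT(n,\bZ)$ via Mal'cev, building $C^1$-small faithful actions of $\UT(n,\bR)$ on an interval by induction on $n$, and then assembling these on a shrinking sequence of disjoint subintervals to exploit residual torsion-free nilpotence --- is broadly in the spirit of the Farb--Franks/Jorquera line of work cited here. You have correctly identified the delicate point: in the inductive step one must arrange that the commutators remain nontrivial while the generators are $C^1$-small. Your sketch of ``placing abelian generators on nested subintervals translated by the inductive action'' is vague at exactly this spot, and making it precise is the genuine content of the Bonatti--Rezaei argument; but since the paper under review does not supply that argument either, no comparison is possible.
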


Here, a group is~\emph{residually torsion--free nilpotent} if every nontrivial element $g\in G$ survives in a torsion--free nilpotent quotient of $G$.
A representation $\rho\in\Hom(G,\Diff^1(M))$ is said to be \emph{$C^1$--close to the identity} if for every $\epsilon>0$, there is an element
$h=h_{\epsilon}\in \Diff^1(M)$ such that $h$ conjugates $\rho$ into an $\epsilon$--neighborhood of the trivial representation of $G$, in the $C^1$--topology on $\Hom(G,\Diff^1(M))$.

It is not difficult to check that if the group $G$ is as in the statement of Theorem~\ref{thm:main} then the only torsion--free nilpotent quotient admitted
by $G$ is $\bZ=G/H$. Thus, the hyperbolicity of the map $\psi^*$ plays a crucial role in the dynamics of the group $G$.

\subsection{Unipotent monodromy maps and virtually special groups}

An essential feature of Theorem~\ref{thm:main} is its ``unstable" nature, in the sense that it does not remain true after passing to finite
index subgroups of $G$. Indeed, we have the following fact, which follows fairly easily from known results:

\begin{prop}\label{prop:virtually special}
Let $N$ be a hyperbolic 3-manifold with finite volume.
Then a finite index subgroup of the fundamental group $\pi_1(N)$ admits a faithful representation $\rho$ into
$\Diff^1([0,1])$ such that $\rho$ is $C^1$--close to the identity.
\end{prop}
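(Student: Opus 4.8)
The plan is to exploit the virtual specialness of finite-volume hyperbolic $3$–manifold groups, which is now a theorem of Agol (building on Wise), to reduce the statement to Theorem~\ref{thm:br2019}. First I would recall that by the work of Agol, Kahn--Markovic, and Wise, the fundamental group $\pi_1(N)$ is virtually special, meaning it has a finite index subgroup $G_0 \le \pi_1(N)$ that embeds in a right-angled Artin group $A(\Gamma)$. Since right-angled Artin groups are residually torsion--free nilpotent (this is a classical fact, due to Duchamp--Krob and also following from the lower central series computations of, e.g., Droms or the work on the associated graded Lie algebra), every subgroup of $A(\Gamma)$ is residually torsion--free nilpotent as well; in particular $G_0$ is residually torsion--free nilpotent.

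With $G_0$ in hand, I would simply invoke Theorem~\ref{thm:br2019} (Bonatti--Rezaei): since $G_0$ is finitely generated (being finite index in the finitely generated group $\pi_1(N)$) and residually torsion--free nilpotent, it admits a faithful representation $\rho\colon G_0 \to \Diff^1([0,1])$ that is $C^1$--close to the identity. Setting $M = [0,1]$, this $G_0$ is the desired finite index subgroup of $\pi_1(N)$, and the proof is complete.

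The main obstacle, such as it is, is not in the dynamical input but in correctly citing and assembling the topological and algebraic ingredients: one must be careful that the virtually special structure gives an embedding into a RAAG on the nose (not merely a virtual embedding into a virtually special group, which is what ``virtually special'' literally asserts), and that residual torsion--free nilpotence is indeed inherited by arbitrary subgroups of RAAGs. Both points are standard, but they are the places where a careless citation could go wrong. One should also note for emphasis that the finite index subgroup $G_0$ produced this way will in general \emph{not} contain, as a finite index subgroup, any group $G$ of the type appearing in Theorem~\ref{thm:main} with $\psi^*$ hyperbolic --- indeed, this is precisely the ``unstable'' phenomenon the proposition is meant to illustrate, since a fibered hyperbolic $3$–manifold whose monodromy is hyperbolic on homology falls under Theorem~\ref{thm:main}, yet its fundamental group is commensurable to the residually torsion--free nilpotent group $G_0$ above. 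This juxtaposition is the conceptual content worth recording after the formal proof.
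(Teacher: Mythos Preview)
Your proof is correct and follows essentially the same route as the paper: invoke Agol--Wise to find a finite index special subgroup $G_0\le\pi_1(N)$, embed $G_0$ in a right-angled Artin group, use the residual torsion--free nilpotence of RAAGs (Duchamp--Krob), and then apply Theorem~\ref{thm:br2019}. Your additional remarks about inheritance of residual torsion--free nilpotence by subgroups and about the ``unstable'' contrast with Theorem~\ref{thm:main} are accurate and align with the paper's own commentary.
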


\begin{proof}
The essential point is that combining rather deep results of Agol and Wise with some combinatorial group theory arguments of Duchamp and Krob, 
one sees that the fundamental group $\pi_1(N)$ contains a finite index subgroup which is
residually torsion--free nilpotent, and hence admits a faithful representation into $\Diff^1([0,1])$
that is $C^1$--close to the identity.

In more detail, by the work of Agol and Wise~\cite{Agol2013,Wise2011}, there is a finite index subgroup $G_0<\pi_1(N)$ such that $G_0$
is \emph{special}. In particular, $G_0$ embeds in a \emph{right-angled
Artin group}; such a group is always residually torsion--free nilpotent~\cite{DK1992a}. See also the discussion in \cite[Chapter 5]{MR3444187}. Thus, the proposition follows from
Theorem~\ref{thm:br2019}.
\end{proof}

Thus if $G$ is a group satisfying  the hypotheses of Theorem~\ref{thm:main}, then passing to a finite index subgroup $G_0$, one
often obtains a group
satisfying the hypotheses of Theorem~\ref{thm:br2019}.
In such a case, one can build a $C^1$ action of $G$ on the disjoint union of $n$ copies of $[0,1]$, where
$n=[G:G_0]$, by an analogue of the induced representation of a finite index subgroup. Such an action will permute the components
of this manifold transitively.
This does not contradict Theorem~\ref{thm:main},
since any such action will be outside of a fixed neighborhood of the trivial representation of $G$.

One can produce many fibered $3$--manifold groups, even hyperbolic ones, which are residually torsion--free nilpotent, without using the deep
results of Agol and Wise. Indeed, it suffices to use monodromy maps $\phi$ such that $\phi^*$ is unipotent (i.e.\ has all eigenvalues equal to one).
In this case, the resulting $G$ will always be residually
torsion-free nilpotent~\cite{KoberdaJTA}. In fact, a semidirect product of $\bZ$ with a finitely generated, residually torsion--free 
nilpotent group $H$ will again be residually torsion--free nilpotent if the induced $\bZ$--action on $H^1(H,\bR)$ is unipotent.

It is not true that if a semidirect product $H\rtimes_\psi \bZ$ is residually torsion--free nilpotent then $\psi^*$ is unipotent. 
Indeed, considering a fibered hyperbolic 3-manifold group $\pi_1(T)$ satisfying the
hypotheses of Theorem~\ref{thm:main} and passing to a finite index subgroup which is special (as in Proposition~\ref{prop:virtually special}),
we can obtain a new mapping torus structure
on a finite cover $T_0$ of $T$ with monodromy $\phi_0$, and with a fiber $S_0$ which covers $S$. The action of $\phi_0^*$ on $H^1(S_0,\bR)$
will not be unipotent, since $H^1(S,\bR)$ will naturally sit inside $H^1(S_0,\bR)$ via pullback and will be invariant under $\phi_0^*$.

The regularity assumption in Theorem~\ref{thm:br2019} is subtle. Nonabelian nilpotent groups cannot admit faithful $C^2$ actions on
any compact one--manifold~\cite{PT1976}. Right-angled Artin groups and specialness do not provide any help in producing higher
regularity actions, since in dimension one they almost never admit faithful $C^2$ actions on
compact manifolds~\cite{BKK2016,KKFreeProd2017}. The compactness of the manifold acted upon here is also essential; see~\cite{BKK2014}.

\subsection{General group actions on compact manifolds}

A robust trend in the theory of group actions on manifolds is that ``large" groups should not act on ``small" manifolds. Among the striking
results in this area are the facts that irreducible lattices in higher rank semisimple Lie groups do not admit infinite image $C^1$ actions (and
often even $C^0$ actions)
on compact $1$--manifolds~\cite{BM1999,Ghys1999,Witte1994}. For higher dimensional manifolds, the work of Brown--Fisher--Hurtado
shows that for $n\geq 3$, groups commensurable with $\mathrm{SL}_n(\bZ)$ do not admit faithful $C^1$ actions on $m$--dimensional
compact manifolds for $m<n-2$, and for $m<n-1$ if the actions preserve a volume form~\cite{BFH2016,BFH2017}. They obtain similar
results for cocompact lattices in simple Lie groups.

Lattices in rank one Lie groups often do admit faithful smooth actions on compact one manifolds. By~\cite{BHW2011}, many arithmetic
lattices in $\mathrm{SO}(n,1)$ are virtually special and hence
virtually residually torsion-free nilpotent, which by Theorem~\ref{thm:br2019} furnishes many faithful
$C^1$ actions of such lattices.

McCarthy's result~\cite{McCarthy} furnishes a class of solvable 
groups which admit  no faithful, small $C^1$ actions on compact manifolds whatsoever.
Topologically, her groups arise as fundamental groups of torus bundles over the circle, with no restrictions on the dimension.
Our main result identifies a larger class of such groups, including ones within the much more dimensionally restricted
and algebraically different class of compact $3$--manifolds groups.
For fibered $3$--manifolds groups acting without at least some smallness assumptions, we can only make much weaker statements:

\begin{prop}\label{p:c3}
If $T$ is a closed, hyperbolic, fibered  $3$--manifold, then the universal circle
action of the fundamental group $\pi_1(T)$ on $S^1$ is not topologically conjugate to a $C^3$ action.
\end{prop}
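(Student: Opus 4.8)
The plan is to realise the universal circle action explicitly and then derive a contradiction from the following tension: multipliers of hyperbolic elements at their attracting fixed points are conjugacy invariants of $C^1$ circle actions, whereas the orbit of any nontrivial element of the fiber group under the monodromy has closed--geodesic lengths tending to infinity, because the monodromy is pseudo-Anosov. To set things up, write $S$ for the fiber of $T$ and $\psi$ for the monodromy, so $\pi_1(T)\cong\pi_1(S)\rtimes_{\psi_*}\bZ$. Since $T$ is closed, hyperbolic and fibered, Thurston's theorem forces $S$ to be a closed surface of genus at least $2$ and $\psi$ to be pseudo-Anosov, hence of infinite order in the mapping class group of $S$. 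Fixing a hyperbolic metric on $S$, identify $\pi_1(S)$ with a cocompact Fuchsian group $\Gamma_0<\PSL(2,\bR)$ acting on $S^1=\partial\mathbb{H}^2$; a lift $\widetilde\psi$ of $\psi$ to $\mathbb{H}^2$ is a quasi-isometry, so it extends to a homeomorphism $\partial\widetilde\psi$ of $S^1$ which normalizes $\Gamma_0$ and realises $\psi_*$. In these coordinates the universal circle action $\rho\colon\pi_1(T)\to\Homeo(S^1)$ restricts to the Fuchsian action on $\pi_1(S)$ and sends a generator $t$ of $\pi_1(T)/\pi_1(S)\cong\bZ$ to $\partial\widetilde\psi$; in particular $\rho$ is minimal, since $\rho|_{\pi_1(S)}$ already is.

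Suppose for contradiction that $\bar\rho=h\,\rho(\cdot)\,h^{-1}$ is a $C^3$ action, for some homeomorphism $h$ of $S^1$. Then $\bar\rho|_{\pi_1(S)}$ is a minimal $C^3$ action of the cocompact surface group $\pi_1(S)$ that is topologically conjugate to the Fuchsian action $\rho|_{\pi_1(S)}$. At this point I would invoke the differentiable rigidity of Fuchsian actions (Ghys; this is the step where the $C^3$ hypothesis is used): a $C^3$ action of a cocompact surface group topologically conjugate to a Fuchsian action is $C^1$--conjugate to it. Conjugating $\bar\rho$ by such a $C^1$ diffeomorphism of $S^1$, we may thus assume that $\bar\rho(\pi_1(S))=\Gamma_1<\PSL(2,\bR)$ is a cocompact Fuchsian group, while $\bar\rho(t)$ --- being a $C^1$--conjugate of a $C^3$ diffeomorphism --- is still a $C^1$ diffeomorphism of $S^1$, which normalizes $\Gamma_1$ and realises $\psi_*$.

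Now I would run the multiplier argument. Let $m_1$ be the hyperbolic metric on $S$ corresponding to $\Gamma_1$ under $\pi_1(S)\cong\Gamma_1$, and fix $\gamma\in\pi_1(S)$ with $\gamma\neq 1$. Then $\bar\rho(\gamma)\in\Gamma_1$ is a hyperbolic element of $\PSL(2,\bR)$, and its derivative at its attracting fixed point on $S^1$ equals $e^{-\ell_1}$, where $\ell_1=\ell_{m_1}([\gamma])>0$ is the $m_1$--length of the closed geodesic freely homotopic to $\gamma$. Since $\psi_*^n(\gamma)=t^n\gamma t^{-n}$ in $\pi_1(T)$, we have $\bar\rho(\psi_*^n\gamma)=\bar\rho(t)^n\,\bar\rho(\gamma)\,\bar\rho(t)^{-n}$, which is a $C^1$--conjugate of $\bar\rho(\gamma)$; as the multiplier at a fixed point is invariant under $C^1$ conjugation and $\bar\rho(\psi_*^n\gamma)$ again lies in $\Gamma_1$, we conclude that $\ell_{m_1}(\psi^n[\gamma])=\ell_1$ for every $n$. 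But a closed hyperbolic surface has only finitely many closed geodesics of a given length, so this leaves only finitely many possibilities for the classes $\psi_*^n([\gamma])$, forcing $\psi_*^k([\gamma])=[\gamma]$ for some $k\geq 1$; writing $\psi_*^k(\gamma)=w\gamma w^{-1}$ with $w\in\pi_1(S)$, the elements $\gamma$ and $w^{-1}t^k$ then commute and generate a $\bZ^2$ subgroup of $\pi_1(T)$, which is impossible, since the fundamental group of a closed hyperbolic $3$--manifold contains no $\bZ^2$. This contradiction proves Proposition~\ref{p:c3}.

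The step I expect to be the main obstacle is the rigidity reduction of the third paragraph: upgrading the purely topological conjugacy between $\bar\rho|_{\pi_1(S)}$ and the Fuchsian action to a $C^1$ conjugacy. This is precisely where the $C^3$ hypothesis is essential --- in regularity $C^1$ there are Denjoy-type actions of surface groups topologically but not smoothly conjugate to Fuchsian ones, and for those the identity ``attracting multiplier $=e^{-\ell}$'' fails, so the argument collapses. The remaining ingredients are comparatively soft: the explicit model for the universal circle of a surface bundle, conjugacy-invariance of multipliers of hyperbolic elements (which uses only $C^1$ regularity), and the nonexistence of $\psi$--periodic free homotopy classes, i.e.\ the atoroidality of $T$.
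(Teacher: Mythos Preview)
Your argument is correct, and it shares the paper's decisive ingredient: Ghys' differentiable rigidity (Theorem~\ref{thm:ghys93}) to replace the $C^3$ action of $\pi_1(S)$ by an honest Fuchsian one, after which $\bar\rho(t)$ is a $C^1$ normalizer of a cocompact Fuchsian group. The endgames diverge, however. The paper packages the remaining work into Proposition~\ref{p:normalizer}: any absolutely continuous normalizer of a cocompact Fuchsian group already lies in $\PSL_2(\bR)$ (via a Sullivan--Ghys argument using that the marked length spectrum, and hence the isometry class of the surface, is preserved), and then Zariski density forces the normalizer to be a finite extension of the Fuchsian group --- an impossibility for a pseudo-Anosov monodromy. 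You instead exploit multiplier invariance more parsimoniously: rather than invoking marked length spectrum rigidity, you only use that $\ell_{m_1}(\psi^n[\gamma])$ is constant in $n$, couple this with the finiteness of closed geodesics of a given length to produce a $\psi$--periodic free homotopy class, and finish with atoroidality of $T$. Your route is more elementary (it avoids both the Sullivan argument and the Zariski--density step), while the paper's route yields the stronger standalone statement Proposition~\ref{p:normalizer}, valid for absolutely continuous rather than merely $C^1$ normalizers.
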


Proposition~\ref{p:c3} follows immediately from the work of Miyoshi~\cite{Miyoshi}. We will deduce Proposition~\ref{p:c3} from a stronger fact (Proposition~\ref{p:c3-2}) in Section~\ref{sec:general} for the convenience of the reader.

There is no hope of establishing a result as sweeping as the Brown--Fisher--Hurtado resolution of many cases of the Zimmer Conjecture
for $3$--manifold groups acting on the circle, even with maximal regularity  assumptions:

\begin{prop}[{e.g.~\cite{Calegari2006GD}}]\label{p:analytic}
There exist finite volume hyperbolic $3$--manifold subgroups of $\mathrm{PSL}_2(\bR)$.
\end{prop}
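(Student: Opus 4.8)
The plan is to cite, and sketch the idea behind, the ``real places'' construction of Calegari~\cite{Calegari2006GD}: one produces a closed hyperbolic $3$--manifold $N$, fibering over the circle, whose fundamental group admits a faithful (necessarily non-discrete) representation into $\mathrm{PSL}_2(\bR)$. First I would set up the arithmetic. If $N$ is a finite-volume hyperbolic $3$--manifold with holonomy $\rho_0\colon\pi_1(N)\to\mathrm{PSL}_2(\bC)$, then after conjugation and, if needed, passing to a finite-index subgroup, $\rho_0$ takes values in $\mathrm{PSL}_2(k)$ for a number field $k$ containing the traces $\tr\rho_0$. For each embedding $\sigma\colon k\hookrightarrow\bC$ one has the Galois conjugate $\rho_0^\sigma$, obtained by applying $\sigma$ entrywise to lifts of the $\rho_0(\gamma)$; when $\sigma$ is a \emph{real} place, i.e.\ $\sigma(k)\subset\bR$, this produces $\rho_0^\sigma\colon\pi_1(N)\to\mathrm{PSL}_2(\bR)$. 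For $N$ closed such a representation can never be simultaneously discrete and faithful, since a torsion-free discrete subgroup of $\mathrm{PSL}_2(\bR)=\Isom^+(\mathbb{H}^2)$ is free or a closed surface group, hence of cohomological dimension at most $2$, whereas $\pi_1(N)$ has cohomological dimension $3$; so the image of the would-be embedding is forced to be dense, and the entire issue is the \emph{faithfulness} of $\rho_0^\sigma$.

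Second, I would take $N=S_\phi$ to be a surface bundle over the circle with closed fiber $S$ and pseudo-Anosov monodromy $\phi$, so that $N$ is hyperbolic by Thurston and $\pi_1(N)\cong\pi_1(S)\rtimes\bZ$; and I would reduce faithfulness of $\rho_0^\sigma$ on $\pi_1(N)$ to faithfulness of its restriction to the fiber subgroup $\pi_1(S)$. Indeed, $K:=\ker\rho_0^\sigma$ is normal in $\pi_1(N)$ with $K\cap\pi_1(S)=1$, so $K$ embeds into $\pi_1(N)/\pi_1(S)\cong\bZ$ and is trivial or infinite cyclic; if $K=\langle w\rangle\neq1$, then $\pi_1(S)$ normalizes $\langle w\rangle$, so an index-at-most-$2$ subgroup $Q\le\pi_1(S)$ centralizes $w$, and taking $a\in Q$ of infinite order one obtains $\langle a,w\rangle\cong\bZ^2\le\pi_1(N)$ — the intersection $\langle a\rangle\cap\langle w\rangle$ being trivial because $w$ has nonzero image in $\bZ$ — contradicting word-hyperbolicity of $\pi_1(N)$. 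Hence it suffices to make $\rho_0^\sigma$ faithful on the closed surface group $\pi_1(S)$.

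The remaining point is the crux of the matter, and is exactly what \cite{Calegari2006GD} supplies: one must build $N=S_\phi$ so that its (invariant) trace field $k$ genuinely has a real place $\sigma$, and so that $\rho_0^\sigma|_{\pi_1(S)}$ does not degenerate. In Calegari's examples one in fact arranges $\rho_0^\sigma|_{\pi_1(S)}$ to be a \emph{Fuchsian} (discrete, cocompact, faithful) representation of the surface group, which is consistent with the real dimension of the $\mathrm{PSL}_2(\bR)$--character variety of $\pi_1(S)$ coinciding with that of Teichm\"uller space. The monodromy $\phi$ is produced by a Thurston-type construction designed to control the Galois orbit of the holonomy and to force its real conjugate onto the Fuchsian locus of the fiber. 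I expect this to be the main obstacle: a Galois conjugate of a faithful representation is not faithful in general, so some such arithmetic mechanism is unavoidable, and pinning it down is the substance of Calegari's paper rather than something to reprove here.

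Granting this input, $\rho_0^\sigma\colon\pi_1(N)\hookrightarrow\mathrm{PSL}_2(\bR)$ is the desired embedding, proving the proposition; and composing with the M\"obius action of $\mathrm{PSL}_2(\bR)$ on $\partial\mathbb{H}^2\cong S^1$ then realizes $\pi_1(N)$ as a group of real-analytic circle diffeomorphisms, which is the use to which the proposition is put in Section~\ref{sec:general}.
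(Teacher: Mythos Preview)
Your overall strategy --- transport the holonomy through a real place of the trace field --- is the right one, but the proposal is built around a misconception. You assert that ``a Galois conjugate of a faithful representation is not faithful in general'' and then devote most of the argument to securing faithfulness of $\rho_0^\sigma$: reducing to the fiber subgroup, ruling out a cyclic kernel via word-hyperbolicity, and finally appealing to Calegari's construction to force $\rho_0^\sigma|_{\pi_1(S)}$ onto the Fuchsian locus. None of this is needed. Applying a field embedding $\sigma\colon k\hookrightarrow\bR$ entrywise induces an \emph{injective} group homomorphism $\PSL_2(k)\to\PSL_2(\bR)$: if a lift satisfies $\sigma(A)=\pm I$ then $A=\pm I$, because $\sigma$ is an injective ring map fixing $0$ and $1$. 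Hence $\ker\rho_0^\sigma=\ker\rho_0$, and faithfulness is automatic the moment $\rho_0$ has been conjugated to take values in $\PSL_2(k)$.

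With this observed, the paper's argument is essentially one line: take any finite-covolume Kleinian group $\Gamma$ whose matrix entries lie in a number field $K$ possessing a real place, and apply that place. Since complex places come in conjugate pairs, any $K$ of odd degree over $\bQ$ has a real place; arithmetic Kleinian groups with odd-degree fields are plentiful, the Weeks manifold group being a cocompact example. No fibering hypothesis is required for the proposition, and neither Agol's theorem nor any special feature of Calegari's surface-bundle construction enters the proof.
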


Any such groups act by projective (and hence analytic) diffeomorphisms on $S^1$. We remark that Proposition~\ref{p:analytic} seems
well-known to experts. We refer the reader to Subsection~\ref{ss:analytic} for a further discussion of  analytic hyperbolic $3$--manifold
group actions  on the circle.

\subsection{Uniqueness of the presentation of $G$}

We remark briefly that if $G=\pi_1(T)$
satisfies the hypotheses of Corollary~\ref{cor:main-3M} then there is an essentially unique homomorphism $G\to \bZ$ whose
kernel is isomorphic to a finitely generated group, and in particular the fibered $3$--manifold structure on $T$ is unique
(see~\cite{Thurston86,Stallings1962}). 
Thus, the induced map $\psi^*$ is canonically defined, and one may therefore speak of \emph{the}
monodromy action. For fibered $3$--manifold groups with first Betti number $b_1>1$ this is no longer the case.

\section{Preliminaries}

In this section, we gather the tools we will need to establish the principal result of this paper.

\subsection{The space of $C^1$ actions of $G$}\label{subsec:space}

Recall that in our notation, $M$ denotes a fixed compact Riemannian manifold. 
We denote by $\Diff^1(M)$ the group of $C^1$--diffeomorphisms of $M$. 
For $f\in\Diff^1(M)$, we will write \[D_xf \colon T_xM\to T_{f(x)}M\] for the Jacobian of $f$.

It will be convenient for us to assume that $M$ is $C^1$ embedded in a Euclidean space $\bR^N$ for some $N\gg 0$.
For our purposes, we only require an embedding, though in principle one could require an isometric embedding
by the Nash Embedding Theorem~\cite{Nash}, for example. We reiterate that an isometric embedding is not necessary for the sequel. 

For brevity, we let $\|X\|$ denote the $\ell^\infty$ norm when $X$ is a function, a vector, a matrix or a tensor.
We replace distances in $M$ by distances in $\bR^N$, and
we equip the Jacobian of a diffeomorphism $f$ of $M$ with the $\ell^\infty$ norm arising from $\bR^N$, which we denote by $\|D_xf\|$. 
Note that if $V\cong\bR^N$  is a vector space equipped with the $\ell^\infty$ norm and $T\in\mathrm{End}(V)$, then we have the estimate
\[\|Tv\|\leq N\|T\|\|v\|\] for all $v\in V$. This estimate is in fact more general. Indeed, sometimes, we will
consider linear maps which are defined on subspaces of $\bR^N$ and which have values in $\bR^N$ (for example, the Jacobian of a
diffeomorphism of $M\subset\bR^N$  as above). In this case, we are still able to write down a matrix (which will no longer be square),
that represents this linear map. We will define the supremum norm of such a matrix by  taking the maximum  of the absolute
values of the entries, in which case the same norm estimate holds as for an endomorphism of $V$.
We will make essential use of this estimate in the sequel.

We define the \emph{$C^1$--metric} on $\Diff^1(M)$ by
 \[d(f,g)=\abss{f-g} + \sup_{x\in M} \abss{D_x f - D_x g},\]
 where all these distances and norms are now interpreted in the ambient Euclidean space.

If $G$ is generated by a finite set $S$, then we may define a metric $d_S$ on $\Hom(G,\Diff^1(M))$ via
\[d_S(\rho,\rho')=\max_{s\in S}d(\rho(s),\rho'(s)).\]
This metric $d_S$ determines the \emph{$C^1$--topology} of $\Hom(G,\Diff^1(M))$, and this topology is independent of the choice of the generating set $S$.

For an arbitrary group $G$, we will write $\rho_0\in\Hom(G,\Diff^1(M))$ for the trivial representation of $G$. 
We see that in order to prove Theorem~\ref{thm:main}, it suffices to find some $\epsilon>0$ such that every representation $\rho\in\Hom(G,\Diff^1(M))$ satisfying $d_S(\rho,\rho_0)<\epsilon$  maps $H$ to the identity $1$.

\subsection{Hyperbolic monodromies}

Here, we recall some basic facts from linear algebra of hyperbolic automorphisms of a real vector space. Let $V$ be a
$d$--dimensional vector space over $\bR$, and let $\|\cdot\|_d$ be a fixed norm on $V$. When $A\in\mathrm{GL}(V)$, we say that
$A$ is \emph{hyperbolic} if every eigenvalue of $A$ has modulus different from one.

\begin{lem}\label{lem:hyperbolic}
Let $A\in\mathrm{GL}(V)$ be a hyperbolic automorphism. Then there is an $A$--invariant splitting $V= E^-\oplus E^+$ and a positive integer $p_0$ such that the following conclusions hold
for all $p\ge p_0$:

\begin{enumerate}
\item
if $v\in E^-$ then \[\|A^pv\|_d\leq\frac{1}{2}\|v\|_d;\]
\item\label{i2:hyp}
if $v\in E^+$ then \[\|A^pv\|_d\geq 2\|v\|_d.\]
\end{enumerate}
\end{lem}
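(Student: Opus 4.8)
The plan is to use the Jordan decomposition of $A$ to separate the "contracting" and "expanding" spectral parts, and then absorb the polynomial growth coming from Jordan blocks into the geometric factor coming from the eigenvalue moduli. First I would complexify $V$ and decompose $V\otimes\bR\bC$ into the generalized eigenspaces of $A$. Group the generalized eigenspaces according to whether the associated eigenvalue has modulus strictly less than one or strictly greater than one; hyperbolicity guarantees there are no other eigenvalues, so this is an exhaustive partition. These two sums are $A$--invariant and defined over $\bR$ (they are each a sum of generalized eigenspaces closed under complex conjugation, since $A$ is a real matrix and nonreal eigenvalues come in conjugate pairs), so they descend to an $A$--invariant real splitting $V=E^-\oplus E^+$.

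Next I would establish the norm estimates on each piece. On $E^-$, let $\lambda<1$ be the maximum modulus of an eigenvalue of $A\restrict{E^-}$. By the standard spectral radius estimate (or directly: write $A\restrict{E^-}=S+N$ in multiplicative Jordan form, so $\|A^p v\|$ is bounded by $C\,p^{k}\lambda^p\|v\|$ for constants $C,k$ depending only on $A$ and the chosen norm $\|\cdot\|_d$), we get $\|A^p v\|_d\le C p^k \lambda^p \|v\|_d$ for all $v\in E^-$. Since $\lambda<1$, the quantity $C p^k \lambda^p\to 0$, so there is $p_1$ with $C p^k\lambda^p\le \tfrac12$ for all $p\ge p_1$; this gives conclusion (1). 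On $E^+$, apply the same reasoning to $A^{-1}\restrict{E^+}$, whose eigenvalues all have modulus $<1$: there is $p_2$ with $\|A^{-p}w\|_d\le\tfrac12\|w\|_d$ for all $w\in E^+$ and $p\ge p_2$. Setting $w=A^p v$ for $v\in E^+$ (note $E^+$ is $A^{-1}$--invariant too) yields $\|v\|_d\le\tfrac12\|A^pv\|_d$, i.e.\ $\|A^pv\|_d\ge 2\|v\|_d$, which is conclusion (2). Taking $p_0=\max(p_1,p_2)$ finishes the proof.

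The only mildly delicate point is making sure the polynomial factor $p^k$ from the Jordan blocks does not spoil the contraction or expansion; this is handled automatically because $p^k\lambda^p\to 0$ whenever $|\lambda|<1$, regardless of the degree $k$. Everything else is standard linear algebra; no step is a genuine obstacle, and in fact one could alternatively phrase the whole argument intrinsically by choosing an adapted norm on $V$ in which $A\restrict{E^-}$ and $A\restrict{E^-}^{-1}$ are strict contractions, at the cost of comparing that adapted norm with the given $\|\cdot\|_d$ (all norms on a finite--dimensional space being equivalent). I would present the Jordan--block version since it keeps the constants $C,k,\lambda$ explicit and the dependence on $\|\cdot\|_d$ transparent.
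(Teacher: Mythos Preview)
Your proposal is correct and is exactly the standard argument one would give for this well--known fact. The paper does not actually prove the lemma at all: it simply omits the proof and refers the reader to~\cite[Chapter~1]{KH1995}, so your Jordan--form argument (or the equivalent adapted--norm variant you mention) is a perfectly good way to supply the details.
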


We omit the proof of the lemma, which is well--known; see~\cite[Chapter 1]{KH1995} for instance.
As is standard from dynamics, $E^-$ and $E^+$ are the
\emph{stable} and \emph{unstable} subspaces of $V$ associated to $A$.
In the sequel, we will use the  notation $\pi_+ $ and $\pi_-$ to denote projections $V\to E^+$ and $V\to E^-$ with kernels $E^-$ and $E^+$
respectively. Observe that invariance of the splitting implies that $A$ commutes with each projection $\pi_{+}$ and $\pi_-$.

\subsection{Approximate linearization}

A fundamental tool for proving Theorem~\ref{thm:main} is the following result of Bonatti~\cite{BonattiProches,BMNR2017},
which arose as an interpretation of
Thurston Stability~\cite{Thurston1974Top}, and which we refer to as \emph{approximate linearization}.

\begin{lem}\label{lem:bonatti}
Let $M$ be a compact manifold, let $\eta>0$, and let $k\in\bN$.
Then there exists a neighborhood of the identity $\VV\sse \Diff^1(M)$ such that for all points $x\in M$, for all diffeomorphisms
$f_1,\ldots,f_k\in \VV$
and for all $\epsilon_1,\ldots,\epsilon_k\in \{-1,1\}$,
we have the following:
\begin{equation*}\label{eq:tstab}
\left \|
f_k^{\epsilon_k}\circ \cdots\circ f_1^{\epsilon_1}(x) - x
- \sum_{i=1}^k \epsilon_i (f_i(x)-x)\right \|\le \eta \max_{i=1,\ldots,k} \left\|f_i(x)-x\right \|.
\end{equation*}
\end{lem}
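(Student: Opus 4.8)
The plan is to prove the inequality by induction on $k$, reading it as a first--order Taylor expansion of the orbit map: when $f$ is $C^1$--close to the identity it moves each point $x$ by the small vector $f(x)-x$, its Jacobian $D_xf$ is close to the inclusion $T_xM\hookrightarrow\bR^N$, and consequently composing several such diffeomorphisms simply adds up the displacement vectors, up to an error governed by the $C^1$--size of the $f_i$ times the size of the displacements.

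First I would fix the ambient data. Since $M\subset\bR^N$ is compact and $C^1$ embedded, there are constants $L\ge1$ and $r_0>0$, depending only on $M$, such that any two points $p,q\in M$ with $\abss{p-q}<r_0$ are joined by a $C^1$ path in $M$ of length at most $L\abss{p-q}$. For $f\in\Diff^1(M)$ put $v_f(z)=f(z)-z\in\bR^N$; then the derivative of $v_f$ at $z$ is $D_zf-\iota_z$ (with $\iota_z\colon T_zM\hookrightarrow\bR^N$), so by the estimate $\abss{Tv}\le N\abss{T}\abss{v}$ recalled above, integrating along such a path gives
\[
\abss{v_f(p)-v_f(q)}\le NL\,a\,\abss{p-q},\qquad a=a(\VV):=\sup_{f\in\VV}\ \sup_{z\in M}\abss{D_zf-\iota_z},
\]
whenever $\abss{p-q}<r_0$. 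Since $f^{-1}$ is $C^1$--close to the identity whenever $f$ is, after shrinking $\VV$ we may assume the same Lipschitz bound holds for every $f^{-1}$ with $f\in\VV$; note that $a(\VV)$ and $\sup_{f\in\VV}\sup_z\abss{f(z)-z}$ both tend to $0$ as $\VV$ shrinks to $\{1\}$.

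Then I would run the induction. Set $g_0=\Id$ and $g_j=f_j^{\epsilon_j}\circ g_{j-1}$, and write $m=\max_{1\le i\le k}\abss{f_i(x)-x}$. The statement to prove by induction on $j\le k$ is the conjunction
\[
\Bigl\|g_j(x)-x-\sum_{i=1}^j\epsilon_i\,v_{f_i}(x)\Bigr\|\le\eta_j\,m\qquad\text{and}\qquad\abss{g_j(x)-x}\le C_j\,m,
\]
with $C_j,\eta_j$ depending only on $j$ (and on $N,L$), and $C_0=\eta_0=0$. For the inductive step I split the left side of the first inequality as $\bigl(g_j(x)-g_{j-1}(x)-\epsilon_j v_{f_j}(x)\bigr)+\bigl(g_{j-1}(x)-x-\sum_{i<j}\epsilon_i v_{f_i}(x)\bigr)$; the second summand is $\le\eta_{j-1}m$ by induction, and the first equals $\pm\bigl(v_{f_j}(y_j)-v_{f_j}(x)\bigr)$, where $y_j=g_{j-1}(x)$ if $\epsilon_j=1$ and $y_j=g_j(x)$ if $\epsilon_j=-1$ (using $f_j(g_j(x))=g_{j-1}(x)$ in the latter case), so by the Lipschitz bound it is at most $NLa\abss{y_j-x}$. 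The orbit--size bound $\abss{g_j(x)-x}\le C_jm$ then follows by the triangle inequality from these two pieces; when $\epsilon_j=-1$ this is mildly self--referential, as $\abss{g_j(x)-x}$ occurs on both sides, and one closes it off using $a<1/(2NL)$, obtaining $C_j=2(1+C_{j-1})$ and $\eta_j=\eta_{j-1}+NLaC_j$, whence $\eta_k=NLa\sum_{j=1}^kC_j$. Finally, given $\eta$ and $k$, I would choose $\VV$ small enough that $NL\bigl(\sum_{j\le k}C_j\bigr)a(\VV)<\eta$, that $NLa(\VV)<1/2$, and that $C_k\sup_{f\in\VV}\sup_z\abss{f(z)-z}<r_0$ (so every intermediate point $y_j$ lies within $r_0$ of $x$ and the path estimate applies); this $\VV$ works.

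The step I expect to be the main obstacle is the very feature that shapes the whole argument: the error must be bounded by the \emph{pointwise} quantity $\max_i\abss{f_i(x)-x}$ at the single point $x$, not by any global $C^0$ or $C^1$ norm of the $f_i$ --- this is what makes the lemma a genuine first--order (Thurston--stability) statement. It forces the induction to carry the auxiliary orbit bound $\abss{g_j(x)-x}\le C_j\max_i\abss{f_i(x)-x}$ in tandem with the linearization estimate, since otherwise the Lipschitz errors $NLa\abss{g_j(x)-x}$ cannot be reabsorbed into $\eta\max_i\abss{f_i(x)-x}$. A smaller but genuine technical point inside this is the circularity arising when $\epsilon_j=-1$, where the displacement of $f_j^{-1}$ at $g_{j-1}(x)$ is most naturally recorded as the displacement of $f_j$ at the image $g_j(x)$; handling it cleanly is exactly what dictates the quantitative smallness requirement on $a(\VV)$.
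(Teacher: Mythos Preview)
The paper does not prove this lemma: it is quoted as a result of Bonatti (with external references) and used as a black box throughout. So there is no argument in the paper to compare yours against.

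Your proof is correct and is the natural one. The induction carrying simultaneously the linearization error $\eta_j$ and the orbit bound $C_j$ is exactly the right structure, and your identification of the two cases $\epsilon_j=\pm1$ via $y_j\in\{g_{j-1}(x),g_j(x)\}$ is clean. One small technical wrinkle you might tighten: to invoke the Lipschitz estimate $\abss{v_f(y_j)-v_f(x)}\le NLa\,\abss{y_j-x}$ you need $\abss{y_j-x}<r_0$ \emph{a priori}, but in the $\epsilon_j=-1$ case $y_j=g_j(x)$ and your bound $\abss{g_j(x)-x}\le C_jm$ is itself obtained using that same Lipschitz estimate, so there is a residual circularity beyond the purely algebraic one you already flag. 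The fix is immediate: the crude telescoping bound
\[
\abss{g_j(x)-x}\le\sum_{i\le j}\abss{g_i(x)-g_{i-1}(x)}\le k\cdot\sup_{f\in\VV,\ \epsilon=\pm1}\ \sup_{z\in M}\abss{f^\epsilon(z)-z}
\]
holds without any Lipschitz input, so adding to your list of smallness conditions on $\VV$ the requirement that this last quantity be below $r_0$ removes the issue and lets the rest of your argument run unchanged.
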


Throughout the rest of this paper, we will often suppress the notation $\rho\in \Hom(G,\Diff^1(M))$ and just write 
$gx=g(x)=\rho(g)(x)$ for $g\in G$ and $x\in M$.
We define a \emph{displacement vector} for $g$ at $x$ as
\[\Delta^\rho_x(g) := \rho(g)(x) - x,\]
regarded as an $N$--dimensional row vector. Here, we remind the reader that $M$ is embedded  as a submanifold of $\bR^N$, so that  the
displacement vector becomes a vector in $\bR^N$. Admittedly, the displacement vector depends on the choice of
embedding, though this does not matter since we will ultimately be interested in whether or not it vanishes.

More generally, if $B=\{b_1,\ldots,b_n\}\sse G$ is a finite subset then we define an $n\times N$ matrix
\[\Delta^\rho_x(B) := \left( \Delta_x^\rho(b_i)\right)_{1\le i\le n}.\]
We often write $\Delta_x$ for $\Delta^\rho_x$ when the meaning is clear.
 Then the above lemma asserts that
\begin{equation*}\label{eq:tstab2}
\abss*{
\Delta_x(g_k^{\epsilon_k}\circ \cdots\circ g_1^{\epsilon_1})- \sum_{i=1}^k \epsilon_i \Delta_x(g_i)}\le\eta
\abss*{\Delta_x(\{g_1,\ldots,g_k\})},
\end{equation*}
in the case when $g_i\in G$ and $\rho(g_i)\in \VV$. Here, we remind  the reader that we always  use the supremum norm.

\subsection{First homology and cohomology groups}
We briefly recall for the reader unfamiliar with group homology
that the first homology group of a group $H$ is given by the abelianization \[H_1(H,\bZ)=H/[H,H].\]
When $R\in\{\bZ,\bR\}$, the first cohomology group $H^1(H,R)$ coincides with the abelian group of homomorphisms from $H$ to $R$.
In particular, $H^1(H,\bZ)$ is a free abelian group of the same rank as $H_1(H,\bZ)$.

\section{Proof of Theorem~\ref{thm:main}}
We are now ready to give a proof of Theorem~\ref{thm:main}.
For this, we will fix an automorphism $\psi\in \Aut(H)$ such that $G$ can be written as 
\[G = \form{ H,t \mid tht^{-1}=\psi(h)\text{ for all }h\in H}.\]

\subsection{Reducing to homologically independent generators}
We first establish Lemma~\ref{l:red} below, which will say that we may more or less assume that $H$ is finitely generated and free abelian.

Let $d\ge 1$ be the rank of $H^1(H,\bZ)$. We can find a finite generating set \[S=S_0\sqcup S_1\]
of $H$ such that all of the following hold.
\begin{itemize}
\item The image of $S_0$ in $H_1(H,\bZ)=H/[H,H]$ is a basis for the free part.
\item The image of each element in $S_1$ is torsion or trivial in $H_1(H,\bZ)$.
\end{itemize}

We pick $K\ge2$ so that $\tau^K=0$ for all \[\tau\in \ker\{H_1(H,\bZ)\to H_1(H,\bR)=H_1(H,\bZ)\otimes_{\bZ}\bR\},\]
where the map between
the homology  groups is   the tensoring map.
We enumerate $S_0=\{s_1,s_2,\ldots,s_d\}$, and regard $S_0$ as an ordered set.
Let $A:=(\alpha_{ij})$ be the matrix of the hyperbolic linear automorphism \[\psi^*\co H^1(H,\bZ)\to H^1(H,\bZ)\]
with respect to the basis which is dual to $S_0$, viewed as real homology classes. 
The action $\psi_*$ on  $H_1(H,\bZ)$ is then given by
the transpose $(\alpha_{ji})$. In this case, we can write each $\psi(s_j)$ as 
\begin{equation}\label{eq:conj_psi}
\psi(s_j)=ts_jt^{-1}=\prod_{i=1}^d s_i^{\alpha_{ji}}\tau_j
\end{equation}
for some element $\tau_j\in H$ such that $\tau_j^K\in[H,H]$.
It will be convenient for us to define the subset
\[
S':=  \{u^K\co u\in S_1\cup \{\tau_1,\ldots,\tau_d\}\}\sse[H,H].\]
Observe that each element $h\in [H,H]$ can be expressed as a product of
commutators in $S$. It follows that $h$ can be expressed as a \emph{balanced} word in $S$, which is to say that all generators in $S$ occur with exponent sum zero.
Since $S'\sse [H,H]$, we can find an integer $k_0\ge K$ such that 
 every element in $S'$ is a balanced word of length at most $k_0$ in $S$.  Recall our convention $\|A\|:=\max_{i,j}|\alpha_{ij}|$. We set
\begin{equation}\label{eq:fix_k}
k:=k_0+d\|A\|.
\end{equation}
\begin{lem}\label{l:red}
Let $0<\eta<1$.
Then there exists a neighborhood $\UU\sse\Hom(G,\Diff^1(M))$ of the trivial representation $\rho_0$ such that each of the following relations hold for all representations $\rho\in\UU$ and points $x\in M$.
\begin{enumerate}
\item \label{i1:red}
 $\abss{\Delta^\rho_x(S')}\le\eta \abss{\Delta^\rho_x(S)}$;
\item\label{i2:red} 
$\abss{\Delta^\rho_x(S_1\cup\{\tau_1,\ldots,\tau_d\})}\le\eta \abss{\Delta^\rho_x(S)}$;
\item\label{i3:red}
$\abss{\Delta^\rho_x(S)}= \abss{\Delta^\rho_x(S_0)}$;
\item\label{i4:red}
$\abss*{
\Delta^\rho_x(\psi(S_0))-A \Delta^\rho_x(S_0)}
\le
2\eta \abss*{\Delta^\rho_x(S_0)}$.
\end{enumerate}
\end{lem}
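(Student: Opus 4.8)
The plan is to derive all four estimates from a single application of the approximate linearization lemma (Lemma~\ref{lem:bonatti}), exploiting that every word appearing has length bounded by the integer $k$ fixed in \eqref{eq:fix_k}, \emph{independently of} $\rho$. Concretely: given $0<\eta<1$, let $\VV\sse\Diff^1(M)$ be the identity neighborhood furnished by Lemma~\ref{lem:bonatti} for the parameters $\eta$ and $k$, and let $\UU$ be any neighborhood of $\rho_0$ small enough that $\rho(s)\in\VV$ for all $s\in S$ and $\rho(\tau_j)\in\VV$ for all $j$ (a finite list of open conditions holding at $\rho_0$). The single fact I would isolate first is a \emph{balanced--word estimate}: if $w=s_{i_1}^{\epsilon_1}\cdots s_{i_m}^{\epsilon_m}$ is a balanced word of length $m\le k$ in $S$, then Lemma~\ref{lem:bonatti} applied to $\rho(w)$ gives $\abss*{\Delta^\rho_x(w)-\sum_\ell\epsilon_\ell\Delta^\rho_x(s_{i_\ell})}\le\eta\,\abss{\Delta^\rho_x(S)}$, while $\sum_\ell\epsilon_\ell\Delta^\rho_x(s_{i_\ell})$ collects each generator of $S$ with its exponent sum in $w$ and hence vanishes; so $\abss{\Delta^\rho_x(w)}\le\eta\,\abss{\Delta^\rho_x(S)}$.

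Granting this, item~\eqref{i1:red} is immediate, since by construction every element of $S'$ is a balanced word of length $\le k_0\le k$ in $S$. For~\eqref{i2:red} I would take $u\in S_1\cup\{\tau_1,\dots,\tau_d\}$, note $u^K\in S'$, apply Lemma~\ref{lem:bonatti} to the $K$--fold composition $\rho(u)^K$ to obtain $\abss*{\Delta^\rho_x(u^K)-K\Delta^\rho_x(u)}\le\eta\,\abss{\Delta^\rho_x(u)}$, and combine this with the balanced--word estimate for $u^K$: this yields $(K-\eta)\abss{\Delta^\rho_x(u)}\le\abss{\Delta^\rho_x(u^K)}\le\eta\,\abss{\Delta^\rho_x(S)}$, and since $K\ge2$ and $\eta<1$ we have $K-\eta\ge1$, so $\abss{\Delta^\rho_x(u)}\le\eta\,\abss{\Delta^\rho_x(S)}$; maximizing over $u$ gives~\eqref{i2:red}. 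Item~\eqref{i3:red} is then formal: \eqref{i2:red} forces $\abss{\Delta^\rho_x(S_1)}\le\eta\,\abss{\Delta^\rho_x(S)}$ with $\eta<1$, which, given $\abss{\Delta^\rho_x(S)}=\max\{\abss{\Delta^\rho_x(S_0)},\abss{\Delta^\rho_x(S_1)}\}$, is only possible if $\abss{\Delta^\rho_x(S_1)}\le\abss{\Delta^\rho_x(S_0)}$, i.e.\ $\abss{\Delta^\rho_x(S)}=\abss{\Delta^\rho_x(S_0)}$.

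For~\eqref{i4:red} I would use \eqref{eq:conj_psi} to write $\psi(s_j)$ as the word $s_1^{\alpha_{j1}}\cdots s_d^{\alpha_{jd}}\,\tau_j$, a composition of at most $\sum_i|\alpha_{ji}|+1\le d\|A\|+1\le k$ elements of $\VV$ (the $\rho(s_i)^{\pm1}$ and $\rho(\tau_j)$); then Lemma~\ref{lem:bonatti} gives $\abss*{\Delta^\rho_x(\psi(s_j))-\sum_i\alpha_{ji}\Delta^\rho_x(s_i)-\Delta^\rho_x(\tau_j)}\le\eta\max\{\abss{\Delta^\rho_x(S_0)},\abss{\Delta^\rho_x(\tau_j)}\}$. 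Since $\abss{\Delta^\rho_x(\tau_j)}\le\eta\,\abss{\Delta^\rho_x(S_0)}$ by \eqref{i2:red}--\eqref{i3:red}, the right side is $\le\eta\,\abss{\Delta^\rho_x(S_0)}$; transferring the $\Delta^\rho_x(\tau_j)$ term to the left and using \eqref{i2:red}--\eqref{i3:red} once more yields $\abss*{\Delta^\rho_x(\psi(s_j))-\sum_i\alpha_{ji}\Delta^\rho_x(s_i)}\le2\eta\,\abss{\Delta^\rho_x(S_0)}$, and recognizing the vectors $\sum_i\alpha_{ji}\Delta^\rho_x(s_i)$ as the rows of $A\,\Delta^\rho_x(S_0)$ and maximizing over $j$ gives~\eqref{i4:red}. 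I expect the main obstacle to be organizational rather than substantive: one must guarantee that \emph{every} word used stays within the fixed, $\rho$--independent bound $k=k_0+d\|A\|$ — in particular, in~\eqref{i4:red} one cannot expand $\psi(s_j)$ into an uncontrolled word in $S$, which is precisely why \eqref{eq:conj_psi} is invoked and $\tau_j$ is carried as a single symbol whose displacement is bounded \emph{a posteriori} via~\eqref{i2:red} — and one must run the four items in the order \eqref{i1:red}$\Rightarrow$\eqref{i2:red}$\Rightarrow$\eqref{i3:red}$\Rightarrow$\eqref{i4:red} so that each has its predecessors at hand.
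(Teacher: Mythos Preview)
Your proof is correct and follows essentially the same route as the paper's: you choose $\VV$ via Lemma~\ref{lem:bonatti} with parameters $\eta$ and $k$, define $\UU$ by the condition $\rho(S\cup\{\tau_1,\dots,\tau_d\})\sse\VV$, and then establish \eqref{i1:red}--\eqref{i4:red} in exactly the same order and by the same estimates (balanced--word bound for~\eqref{i1:red}, the $K$--fold composition trick for~\eqref{i2:red}, the sup--norm observation for~\eqref{i3:red}, and the expansion \eqref{eq:conj_psi} carrying $\tau_j$ as a single letter for~\eqref{i4:red}). The only cosmetic difference is that in~\eqref{i4:red} you write the right-hand side of the approximate linearization as $\eta\max\{\abss{\Delta^\rho_x(S_0)},\abss{\Delta^\rho_x(\tau_j)}\}$ while the paper writes $\eta\,\abss{\Delta^\rho_x(S\cup\{\tau_j\})}$, both of which reduce to $\eta\,\abss{\Delta^\rho_x(S_0)}$ via \eqref{i2:red}--\eqref{i3:red}.
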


\begin{proof}
Let $k$ be defined as in \eqref{eq:fix_k}.
We have an identity neighborhood  $\VV\sse\Diff^1(M)$ furnished 
by Lemma~\ref{lem:bonatti} for $\eta$ and $k$.
We define $\mathcal{U}$ by
\[\mathcal{U}=\left \{\rho\in\Hom(G,\Diff^1(M))\co \rho(S\cup\{\tau_1,\ldots,\tau_d\})\sse\VV\right \}.\]
We now fix $\rho\in\UU$, and we suppress $\rho$ from the notation by writing
 $g(x):=\rho(g)(x)$. Similarly, we write $\Delta_x(g):=\Delta_x^\rho(g)$. So, $\Delta_x(g)$ will be thought of as a function of the group element
 $g$, and which depends on $x$ as well.

\eqref{i1:red} Let $u\in S'$, so that $u$ can be expressed as a balanced word in  $S$ with length at most $k_0<k$. We see from Lemma~\ref{lem:bonatti} that
\[\abss{\Delta_x(u)}\le\eta\abss{\Delta_x(S)}.\]
This proves part \eqref{i1:red}. 

\eqref{i2:red}
Let $u\in S_1\cup\{\tau_1,\ldots,\tau_d\}$.
Since $u\in \VV$ by assumption, we again use Lemma~\ref{lem:bonatti} to see that
\[
\abss*{\Delta_x(u^K) - K \Delta_x(u)} \le\eta \abss*{\Delta_x(u)}.\]
Using the triangle inequality and part \eqref{i1:red}, we see that
\begin{align*}
K\abss*{\Delta_x(u)}
\le \abss*{\Delta_x(u^K)} +\eta\abss*{\Delta_x(u)}
\le \eta \abss{\Delta_x(S)}+\eta\abss*{\Delta_x(u)}.
\end{align*}
Since $K\ge2$, we obtain the desired conclusion as
\[
\abss*{\Delta_x(u)}\le \frac{\eta}{K-\eta}\abss{\Delta_x(S)}
 \le \eta \abss{\Delta_x(S)}.\]

Part \eqref{i3:red} is obvious from the previous parts. For part \eqref{i4:red}, 
let us pick an arbitrary $s_j\in S_0$.
From the expression \eqref{eq:conj_psi} for $\psi(s_j)=ts_jt^{-1}$ and from
Lemma~\ref{lem:bonatti},
we can deduce that
\[
\abss*{
\Delta_x(\psi(s_j))-\sum_{i=1}^d\alpha_{ji} \Delta_x(s_i)
-\Delta_x(\tau_j)
}
\leq \eta\abss{\Delta_x(S\cup\{\tau_j\})}=
 \eta\abss{\Delta_x(S_0)}.\]
The triangle inequality and the second and third parts of the lemma imply the 
conclusion of part \eqref{i4:red}.
\end{proof}

\subsection{McCarthy's Lemma}
Retaining previous notation, we have a group $G$ presented as $H\rtimes_\psi \form{t}$.
Another ingredient for the proof of the main theorem is the following lemma, which was proved by McCarthy~\cite[Lemmas 4.1 and 4.2]{McCarthy} in the case when $H$ is abelian:

\begin{lem}[{cf.\ Lemmas 4.1 and 4.2 of~\cite{McCarthy}}]\label{l:mccarthy}
For all $\eta\in(0,1/3)$, there exists
 a neighborhood $\mathcal{U}\sse\Hom(G,\Diff^1(M))$ of the trivial representation $\rho_0$ such that whenever $\rho\in\mathcal{U}$ and $x\in M$ we have 
\[
\abss*{
\Delta^\rho_{\rho(t^{-1})(x)}(S_0)-A \Delta^\rho_x(S_0)}
\le \eta \abss*{\Delta^\rho_x(S_0)}.\]
\end{lem}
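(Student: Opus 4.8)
The plan is to combine the approximate‐linearization estimate with the relation $\psi(h)=tht^{-1}$, transported to the point $\rho(t^{-1})(x)$, and then absorb all error terms into a single constant multiple of $\abss{\Delta_x^\rho(S_0)}$. Fix $\eta\in(0,1/3)$. I would first invoke Lemma~\ref{l:red} to obtain a neighborhood on which conclusions \eqref{i1:red}--\eqref{i4:red} hold with a parameter $\eta'$ to be chosen small compared to $\eta$ (say $\eta'<\eta/10$, with the exact threshold fixed at the end). The key identity is that for each $s_j\in S_0$ we have $s_j=t^{-1}\psi(s_j)t$, equivalently $\rho(t^{-1})\rho(\psi(s_j))\rho(t)(x)=\rho(s_j)\rho(t^{-1})\rho(t)(x)=\rho(s_j)(x)$ — but the cleaner route is to evaluate displacements at the moved point. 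Writing $y:=\rho(t^{-1})(x)$, one has $\rho(t)(y)=x$, and $\rho(\psi(s_j))(x)=\rho(t)\rho(s_j)\rho(t^{-1})(x)=\rho(t)\rho(s_j)(y)$.

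So the first real step is a displacement‐transfer computation: I want to compare $\Delta_y(s_j)$ with $\Delta_x(\psi(s_j))$. Since $\rho(\psi(s_j))(x)=\rho(t)\rho(s_j)(y)$ and $x=\rho(t)(y)$, we get
\[
\Delta_x(\psi(s_j)) = \rho(t)\bigl(\rho(s_j)(y)\bigr) - \rho(t)(y).
\]
Now apply Lemma~\ref{lem:bonatti} (approximate linearization) to the single diffeomorphism $\rho(t)$ near the identity, at the two points $\rho(s_j)(y)$ and $y$: the difference $\rho(t)(z)-\rho(t)(w)$ is within $\eta'\max(\abss{\rho(t)(z)-z},\abss{\rho(t)(w)-w})$-type error of $z-w$. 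Concretely, using the estimate of Lemma~\ref{lem:bonatti} with $k=2$ applied to $\rho(t)$ and $\rho(t)^{-1}$ one controls $\rho(t)(z)-\rho(t)(w)-(z-w)$ in terms of $\eta'$ times the displacements of $\rho(t)$ at nearby points, which are themselves $O(\abss{\Delta_y(s_j)})$ by the triangle inequality once we know $\rho(t)(y)-y$ and $\rho(t)(\rho(s_j)(y))-\rho(s_j)(y)$ are comparable. This yields
\[
\abss{\Delta_x(\psi(s_j)) - \Delta_y(s_j)} \le C\eta'\,\abss{\Delta_y(S_0\cup\{t\})}
\]
for an absolute constant $C$ (coming from the supremum‐norm matrix estimate and the number of maps composed).

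The second step is to shrink the neighborhood further so that $\abss{\Delta_x^\rho(t)}$ and $\abss{\Delta_y^\rho(t)}$ are themselves dominated by, say, $\eta'\abss{\Delta_x(S_0)}$ — this is not automatic from Lemma~\ref{l:red} (which only constrains $S$, $\tau_j$), so I would enlarge the defining condition on $\mathcal U$ to also require $\rho(t),\rho(t^{-1})\in\VV$ and additionally that the $C^0$-size of $\rho(t)-\mathrm{id}$ is small; but in fact one does \emph{not} need $\Delta(t)$ small — one only needs that the displacement of $\rho(t)$ at the two points $y$ and $\rho(s_j)(y)$ differ by a controlled amount, which follows from $\rho(s_j)$ being $C^0$-small (hence $y$ and $\rho(s_j)(y)$ are close) together with $D\rho(t)$ being close to an isometry on $M$. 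I should be careful here: the honest statement is that $\abss{\Delta_y(s_j)}$ and $\abss{\Delta_x(\psi(s_j))}$ are comparable up to the Lipschitz constant of $\rho(t)$, which is bounded since $\rho(t)\in\VV$ and $M$ is compact. The net conclusion of steps one and two, after also using $\abss{\Delta_y(S_0\cup\{t\})}\le \mathrm{const}\cdot\abss{\Delta_y(S_0)}$ and the fact that $\abss{\Delta_y(S_0)}$ and $\abss{\Delta_x(S_0)}$ are comparable (again Lipschitz control of $\rho(t^{-1})$), is:
\[
\abss{\Delta_y(S_0) - \Delta_x(\psi(S_0))} \le C'\eta'\,\abss{\Delta_x(S_0)}.
\]
Combining this with Lemma~\ref{l:red}\eqref{i4:red}, namely $\abss{\Delta_x(\psi(S_0)) - A\Delta_x(S_0)}\le 2\eta'\abss{\Delta_x(S_0)}$, and the triangle inequality gives
\[
\abss{\Delta_y(S_0) - A\Delta_x(S_0)} \le (C'\eta' + 2\eta')\abss{\Delta_x(S_0)},
\]
and then choosing $\eta'$ small enough that $(C'+2)\eta'<\eta$ finishes the proof, since $y=\rho(t^{-1})(x)$.

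The main obstacle I anticipate is \textbf{the comparability of displacements under $\rho(t)$}: one must show that applying the near‐identity — but \emph{not} necessarily $C^1$-small — diffeomorphism $\rho(t)$ to a displacement vector distorts it by only a small multiplicative amount relative to $\abss{\Delta_x(S_0)}$. This is where both the compactness of $M$ (to get a uniform Lipschitz bound on $\rho(t)\in\VV$) and the approximate‐linearization lemma (to say $\rho(t)$ is, to first order, \emph{linear}, so it nearly commutes with subtraction of nearby points) are essential, and it is the only place where the argument genuinely uses that $t$ — unlike the elements of $H$ — need not act near‐trivially. Once this transfer estimate is in hand, the rest is bookkeeping with the triangle inequality and a final choice of constants. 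I would also double-check that the constant $C'$ depends only on $M$, the embedding, $N$, $d$, and $\abss{A}$, and not on $\rho$, so that the threshold for $\eta'$ can be fixed before choosing $\mathcal U$.
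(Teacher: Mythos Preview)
Your overall architecture matches the paper's: establish a transfer estimate comparing $\Delta_y(s_j)$ with $\Delta_x(\psi(s_j))$ (where $y=\rho(t^{-1})(x)$), combine with Lemma~\ref{l:red}\eqref{i4:red}, and absorb constants. But two steps need repair, and in each the paper proceeds differently from what you propose.

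\textbf{The transfer estimate.} Applying Lemma~\ref{lem:bonatti} to the composition $t\circ s_j$ at $y$ indeed gives
\[
\abss{\Delta_x(\psi(s_j))-\Delta_y(s_j)}\le \eta'\max\bigl(\abss{\Delta_y(s_j)},\abss{\Delta_y(t)}\bigr),
\]
but the term $\abss{\Delta_y(t)}$ is not dominated by $\abss{\Delta_y(S_0)}$; your claimed inequality $\abss{\Delta_y(S_0\cup\{t\})}\le\mathrm{const}\cdot\abss{\Delta_y(S_0)}$ is simply false, and no shrinking of $\mathcal U$ fixes this since the required bound is relative. The paper does not use Lemma~\ref{lem:bonatti} here at all; it uses the derivative directly. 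From $\Delta_x(\psi(s))=\rho(t)(\rho(s)(y))-\rho(t)(y)$ and Taylor's formula one gets $\Delta_x(\psi(s))=D_yt\,(\Delta_y(s))+o(\abss{\Delta_y(s)})$, and since $\rho(t)$ is $C^1$--close to the identity one arranges $N\abss{D_yt-1}\le\eta'$ and the remainder below $\eta'\abss{\Delta_y(s)}$, yielding $\abss{\Delta_x(\psi(s))-\Delta_y(s)}\le 2\eta'\abss{\Delta_y(s)}$. This is exactly the idea you eventually reach for (``$D\rho(t)$ close to an isometry''), so you should lead with it and drop the Lemma~\ref{lem:bonatti} detour.

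\textbf{Comparability of $\abss{\Delta_y(S_0)}$ and $\abss{\Delta_x(S_0)}$.} You assert this follows from ``Lipschitz control of $\rho(t^{-1})$'', but Lipschitz control of $t$ relates $\abss{\Delta_y(s_j)}$ to $\abss{\Delta_x(\psi(s_j))}$ (the same two points pushed by $t$), not to $\abss{\Delta_x(s_j)}$ (the displacement of $s_j$ at a \emph{different} base point); the discrepancy $\Delta_y(s_j)-\Delta_x(s_j)$ involves $\abss{y-x}=\abss{\Delta_x(t^{-1})}$, which is again uncontrolled. The paper instead bootstraps: combining the transfer estimate with Lemma~\ref{l:red}\eqref{i4:red} first gives
\[
\abss{\Delta_y(S_0)-A\Delta_x(S_0)}\le 2\eta'\bigl(\abss{\Delta_y(S_0)}+\abss{\Delta_x(S_0)}\bigr),
\]
from which $(1-2\eta')\abss{\Delta_y(S_0)}\le (d\abss{A}+2\eta')\abss{\Delta_x(S_0)}$; substituting this back into the previous line and choosing $\eta'$ appropriately finishes the proof.
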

Roughly speaking, under the above hypothesis if we denote the displacement matrix of $S_0$ at $x$ as $v$, then $Av$ will be near from the displacement matrix of $S_0$ at $t^{-1}x$. Thus, one can apply hyperbolic dynamics to estimate the change of displacement matrices as points are moved under iterations of $t^{-1}$: \[x\mapsto t^{-1}x\mapsto t^{-2}x\mapsto\cdots\mapsto t^{-n}x\mapsto\cdots\]

Since McCarthy's arguments concerned the case where $H$ abelian and hence do not apply in this situation, let us reproduce proofs here
which work for general groups.

\bp[Proof of Lemma~\ref{l:mccarthy}]
Fix $\eta'\in(0,\eta)$, which will be made  explicit later.
We pick a sufficiently small neighborhood $\mathcal{U}\sse\Hom(G,\Diff^1(M))$ of $\rho_0$, which is at least as small as the neighborhood $\UU$ in Lemma~\ref{l:red} for this choice of $\eta'$. 
We let $\rho\in\UU$, and again suppress the notation $\rho$ in expressions. 
We also fix $x\in M$, and set $y:=t^{-1}x$. 

Suppose we have $s\in S_0$. From the definition of the derivative, we have that 
\[\Delta_x(\psi(s))
=
\Delta_{ty}(tst^{-1})=ts(y)-t(y)=D_yt(\Delta_y(s))+o(\|\Delta_y(s)\|).\]
Replacing $\mathcal{U}$ by a smaller neighborhood if necessary, we may assume that (with a slight abuse of notation)
\[o(\|\Delta_y(s)\|)<\eta'\|\Delta_y(s)\|\]
in norm,
 and that $N\abss{D_yt-1}\le \eta'$, where $1$ denotes the identity map, and $N$ is the dimension of the Euclidean space where $M$ is embedded.
It then follows that 
\begin{equation}\label{eq:taylor}
\abss{\Delta_x(\psi(s))-\Delta_y(s)}
\le N \abss{D_y t - 1}\cdot \abss{\Delta_y(s)} +o(\Delta_y(s))
\le 2\eta'\abss{\Delta_y(s)}.\end{equation}
Here, we are using the $\ell^{\infty}$ norm estimate \[\|Tv\|\leq N\|T\|\|v\|\] for arbitrary vectors $v$ and linear maps $T\colon  \bR^N\to \bR^N$.

 Applying the triangle inequality, Lemma~\ref{l:red}~\eqref{i4:red} and \eqref{eq:taylor}, we deduce that
\begin{equation}\label{eq:x1}
\abss*{\Delta_y(S_0)-A\Delta_x(S_0)}
\le 2\eta' \abss{\Delta_y(S_0)}+ \abss*{\Delta_x(\psi(S_0))-A\Delta_x(S_0)}
\le2\eta' \left (\abss{\Delta_y(S_0)}+ \abss{\Delta_x(S_0)}\right ).\end{equation}
From the inequality \eqref{eq:x1}, we note that
\begin{equation}\label{eq:x2}
(1-2\eta')\abss*{\Delta_y(S_0)}
\le
\abss*{A\Delta_x(S_0)}+2\eta'  \abss{\Delta_x(S_0)}
\le
(d\|A\|+2\eta')\abss{\Delta_x(S_0)}.\end{equation}
We will now choose $\eta'\in(0,\eta)$ sufficiently small so that 
\[
\left(\frac{d\|A\|+2\eta'}{1-2\eta'}+1\right)\cdot 2\eta'
\le\left(\frac{d\|A\|+2/3}{1/3}+1\right)\cdot 2\eta'\le\eta.\]
Combining inequalities \eqref{eq:x1} and \eqref{eq:x2} we obtain the desired conclusion as 
\[
\abss*{\Delta_y(S_0)-A\Delta_x(S_0)}
\le
2\eta'\left( \frac{d\|A\|+2\eta'}{1-2\eta'}+1\right)\abss{\Delta_x(S_0)}
\le\eta\abss{\Delta_x(S_0)}.\qedhere\]
\ep

\subsection{Finishing the proof}
We can now complete the proof of the main result.

\begin{proof}[Proof of Theorem~\ref{thm:main}]
Let $\rho$ be sufficiently near from $\rho_0$.
By Lemma~\ref{l:red}~\eqref{i3:red},  it suffices for us to prove that the $d\times N$ matrix
$\Delta_x(S_0)$ is equal to $0$ for all points $x\in M$.

The hyperbolic automorphism $\psi^*$ on $H^1(H,\bZ)$ induces an invariant splitting 
\[
\bR^d=\oplus_{i=1}^d \bR s_i = E^+\oplus E^-,\]
as in Lemma~\ref{lem:hyperbolic}.
We may assume $p_0=1$ in that lemma after replacing $\psi^*$ by a sufficiently large power; this is the same as passing to the kernel of the natural map $G\to\bZ/p\bZ$ given by reducing $G/H$ modulo $p$.

Let us pick a point $x\in M$ such that the quantity
\[
\max \left( \abss{\pi_+\Delta_z(S_0)},\abss{\pi_-\Delta_z(S_0)}\right)\]
is itself maximized at $z=x$. 
Here, $\pi_{\pm}$ is regarded as a map from $\oplus_{i=1}^N \bR^d$ to $\oplus_{i=1}^N E^{\pm}$.

For a proof by contradiction, we will suppose that this maximum is nonzero. We may further assume the maximum occurs for the unstable direction. Since the stable and unstable subspaces of a hyperbolic matrix are symmetric under inversion, the case where the maximum is in the stable direction is analogous. 

Let us choose $\eta\in(0,1/3)$ and a neighborhood $\UU\sse\Hom(G,\Diff^1(M))$ so that the conclusion of Lemma~\ref{l:mccarthy} holds for $\rho\in\UU$. With this choice,
using also the contraction property of $\pi_+$, we estimate
\begin{align*}
\abss{\pi_+\Delta_{t^{-1}x}(S_0)-
\pi_+A\Delta_{x}(S_0)}
& \le 
\abss{\Delta_{t^{-1}x}(S_0)-
A\Delta_{x}(S_0)}
\le \eta \abss{\Delta_x(S_0)}\\
&\le \eta \left(\abss{\pi_+\Delta_x(S_0)}+\abss{\pi_-\Delta_x(S_0)}\right)
\le 2\eta \abss{\pi_+\Delta_x(S_0)}.\end{align*}
On the other hand, applying the triangle inequality and Lemma \ref{lem:hyperbolic} \eqref{i2:hyp} we have
\[
\abss{\pi_+\Delta_{t^{-1}x}(S_0)-
A\pi_+\Delta_{x}(S_0)}
\ge 
\abss{A\pi_+\Delta_{x}(S_0)}
-\abss{\pi_+\Delta_{t^{-1}x}(S_0)}
\ge 2\abss{\pi_+\Delta_{x}(S_0)}-\abss{\pi_+\Delta_{t^{-1}x}(S_0)}.\]
Combining the above chains of inequalities, and using that $A\pi_+=\pi_+A$, we obtain 
\[
\abss{\pi_+\Delta_{t^{-1}x}(S_0)}
\ge 2(1-\eta)\abss{\pi_+\Delta_{x}(S_0)}
> \abss{\pi_+\Delta_{x}(S_0)}.\]
This contradicts the maximality of our choices. \ep

\section{General group actions and questions}\label{sec:general}

As remarked in the introduction, there is no hope of ruling out
highly regular faithful actions of $3$--manifold groups on low dimensional manifolds. Thus, Theorem~\ref{thm:main} can be viewed
as a local rigidity phenomenon of $\Hom(G,\Diff^1(M))$ near the trivial representation $\rho_0$ rather than as a global statement about this space of actions. In
this section we discuss actions of $3$--manifold groups on the circle which are not small, and thus are much less constrained.

\subsection{Universal circle actions}

First, we show that for certain types of faithful actions of $3$--manifold groups, some regularity constraints persist. Let $T$ be a fibered
$3$--manifold with closed, orientable fiber $S$ and monodromy $\psi\in\Mod(S,p)$. We assume that $\chi(S)<0$.
Here, we have equipped $S$ with a basepoint $p$, and we assume that elements of $\Mod(S,p)$
preserve $p$, as do isotopies between them.

We have that the fundamental group $\pi_1(S)$ naturally sits in $\Mod(S,p)$ as the kernel of the homomorphism $\Mod(S,p)\to\Mod(S)$ which forgets the basepoint
$p$~\cite{FM2012}. The short exact sequence \[1\to\pi_1(S)\to\Mod(S,p)\to\Mod(S)\to 1\] is known as the Birman Exact Sequence. The
mapping class group $\Mod(S,p)$ has a natural
faithful action on $S^1$ by homeomorphisms, known as Nielsen's action (see~\cite{CassonBleiler}).
This action of $\Mod(S,p)$ is not conjugate to a $C^1$ action, and even after passing to finite index subgroups it is known not to be
conjugate to a $C^2$
action~\cite{FF2001,BKK2016,KKFreeProd2017,Parwani2008,MannWolff18}. 
Moreover, this action is not  absolutely continuous, as can be easily seen from Proposition~\ref{p:normalizer} below.
However, one can topologically conjugate Nielsen's action to a
bi-Lipschitz one; this is a general fact for countable groups acting on the circle~\cite{DKN2007}.
We remark that Nielsen's action, as it is constructed by extensions of quasi-isometries of $\mathbb{H}^2$ to $S^1$, enjoys a regularity
property known as quasi--symmetry. See~\cite{CassonBleiler,HT85,FM02}.

If $\psi\in\Mod(S,p)$ then the conjugation action of $\psi$ on the group \[\pi_1(S)=\ker\left\{\Mod(S,p)\to\Mod(S)\right\}\]
makes the group
$\langle \psi,\pi_1(S)\rangle$ isomorphic to $\pi_1(T)$. We thus obtain an action, which is called the \emph{universal circle action} of $\pi_1(T)$
(see~\cite{Calegari2007}).
While it follows that $\pi_1(T)$ admits a natural faithful action on $S^1$ by absolutely continuous homeomorphisms,
the higher
 regularity properties of this action are somewhat mysterious.
 
 We now give a proof of Proposition~\ref{p:c3}, which asserts that this action is not topologically conjugate
to a $C^3$ action. As stated in the introduction, this result is known from the work of Miyoshi.
The proof of Proposition~\ref{p:c3} given in~\cite{Miyoshi} follows similar lines to the argument given here,
and is easily implied by the following two results:

\begin{prop}\label{p:normalizer}
	Let $S$ be a closed surface and $\rho:\pi_1(S)\to \PSL_2(\bR)$ be a faithful discrete representation.
	Then the normalizer of $\rho(\pi_1(S))$ in $\Homeo^{\mathrm{ac}}(S^1)$ is a discrete subgroup of $\PSL_2(\bR)$
	which contains $\rho(\pi_1(S))$ as a finite-index subgroup.
\end{prop}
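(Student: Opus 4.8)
The plan is to exploit the fact that an absolutely continuous circle homeomorphism which conjugates a Fuchsian group to itself must be a Möbius transformation. First I would recall that $\rho(\pi_1(S))$ is a cocompact Fuchsian group, so it acts minimally on $S^1$ and its limit set is all of $S^1$; in particular, the action is topologically transitive and has dense orbits. Let $g\in\Homeo^{\mathrm{ac}}(S^1)$ normalize $\rho(\pi_1(S))$, so that conjugation by $g$ induces an automorphism of the Fuchsian group. The key step is to show that $g$ is in fact a Möbius transformation, i.e.\ that $g\in\PSL_2(\bR)$.

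To carry this out, I would use the Lebesgue differentiation theorem: since $g$ is absolutely continuous, $g'$ exists a.e.\ and $g(b)-g(a)=\int_a^b g'$. The normalizing condition $g\circ\gamma = \gamma^g\circ g$ for each $\gamma\in\rho(\pi_1(S))$, where $\gamma^g\in\rho(\pi_1(S))$, lets one compare $g'$ at a point $x$ with $g'$ at $\gamma(x)$ via the chain rule, picking up the explicit (real-analytic, nonvanishing) derivative factors $\gamma'$ and $(\gamma^g)'$ of the Möbius maps. The standard argument here — going back to work on rigidity of Fuchsian groups, and the line followed by Miyoshi — is that a measurable function satisfying such an equivariance relation with respect to a lattice acting with these distortion properties is forced to agree a.e.\ with the boundary distortion cocycle of a single Möbius transformation; equivalently, the pullback of the round measure (Lebesgue measure) by $g$ is a measure invariant under the same Fuchsian group up to the cocycle, and by ergodicity/unique ergodicity-type considerations on the geodesic flow (or directly on the $\pi_1(S)$-action on $S^1\times S^1$ minus the diagonal) this pins $g$ down to lie in the commensurator of $\rho(\pi_1(S))$ inside $\PSL_2(\bR)$, and in fact in its normalizer. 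I expect this measurable-rigidity step to be the main obstacle, since it is where absolute continuity is used in an essential and slightly delicate way; everything else is formal.

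Once $g\in\PSL_2(\bR)$ is established, the rest is classical. The normalizer $N$ of the cocompact Fuchsian group $\Gamma:=\rho(\pi_1(S))$ inside $\Homeo^{\mathrm{ac}}(S^1)$ is then a subgroup of $\PSL_2(\bR)$ containing $\Gamma$; it is discrete because a Fuchsian group of cofinite volume has finite-index normalizer inside $\PSL_2(\bR)$ — indeed $N$ acts on $\mathbb{H}^2$ with $N/\Gamma$ finite (the normalizer of a cocompact lattice is again a lattice, and a lattice containing $\Gamma$ with the same covolume-up-to-finite-index is discrete by Margulis-type or elementary Fuchsian arguments, e.g.\ via the fact that $\mathrm{Isom}(\mathbb{H}^2/\Gamma)$ is finite since $\mathbb{H}^2/\Gamma$ is a closed hyperbolic surface). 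Hence $N$ is a discrete subgroup of $\PSL_2(\bR)$ containing $\Gamma$ as a finite-index subgroup, which is exactly the assertion. I would write the discreteness and finite-index conclusions as a short paragraph citing the finiteness of the isometry group of a closed hyperbolic surface, and devote the bulk of the write-up to the absolute-continuity argument identifying $g$ with a Möbius transformation.
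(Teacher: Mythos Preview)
Your proposal is correct and follows essentially the same line as the paper: both identify the crucial step as showing that an absolutely continuous normalizer $g$ must lie in $\PSL_2(\bR)$, and both defer this to Sullivan's argument (the paper cites it via Ghys~\cite[Proposition~III.4.1]{Ghys1985}, while you sketch its ergodic-theoretic content). Two minor differences are worth noting. First, the paper also records a simpler variant that works under a $C^1$ hypothesis and suffices for Proposition~\ref{p:c3}: a $C^1$ conjugacy preserves derivatives of hyperbolic elements at their fixed points, hence the marked length spectrum, so by marked length spectrum rigidity the conjugation is realized by a M\"obius map; you do not mention this shortcut. Second, for discreteness and finite index the paper argues directly that a sequence in the normalizer converging to $1$ must eventually centralize $\Gamma$, and then invokes Zariski density to conclude the centralizer is trivial; your route via the finiteness of $\Isom(\mathbb{H}^2/\Gamma)$ is equally valid and arguably cleaner.
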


\bp
Let $g$ be an absolutely continuous homeomorphism of the circle which normalizes $\rho(\pi_1(S))$.
Then by an argument originally due to Sullivan (see \cite[Proposition III.4.1]{Ghys1985}), we see that $g$ is actually contained in
$\PSL_2(\bR)$. Ghys gives a relatively simple argument under the assumption of $C^1$ conjugacy,
which in turn suffices for Proposition \ref{p:c3}.
In this case, all derivatives of hyperbolic elements at their fixed points must be preserved by the conjugacy.
In other words, the marked length spectrum associated with the Fuchsian group $\rho(\pi_1(S))$ is invariant,  and
thus the isometry class of the corresponding hyperbolic surface is preserved.

Now, it follows from standard facts about Zariski dense subgroups of simple Lie groups
 that the normalizer of a Fuchsian group in $\PSL_2(\bR)$ is necessarily Fuchsian,
 and contains the original Fuchsian group with finite index. Indeed, suppose $\Gamma<\PSL_2(\bR)$ is discrete and let
 $\{g_i\}_{i\in \bN}\subset\PSL_2(\bR)$ normalize $\Gamma$. Suppose furthermore that $g_i\to 1$ as $i\to\infty$. Then it is not difficult
 to show that $g_i$ must centralize $\Gamma$ for $i$ sufficiently large. If $\Gamma$ is Zariski dense then  $g_i$ is the
 identity for $i$ sufficiently large, so  that the normalizer of $\Gamma$ is again discrete. If $\Gamma$ is cocompact then its normalizer must
  contain $\Gamma$ with finite index. See~\cite[Theorem 2.3.8]{KatokBook} for more details. The conclusion of the proposition now  follows.
\ep

Proposition~\ref{p:normalizer} implies the following: let $\psi\in\Mod(S)$ be pseudo-Anosov,
and let $\yt{\psi}$  be in the preimage of $\psi$ under the
canonical map $\Mod(S,p)\to\Mod(S)$ given by deleting the marked point. Then $\yt\psi$ 
fails to act by an absolutely continuous homeomorphim on $S^1$ under Nielsen's action of $\Mod(S,p)$ on $S^1$.

The following result is known as Ghys' differentiable rigidity of Fuchsian actions~\cite{GhysIHES93}.

\begin{thm}\label{thm:ghys93}
	Let $S$ be a closed surface and let $\rho:\pi_1(S)\to \Diff^r(S^1)$
	for $r\ge 3$ be a representation which is topologically conjugate to a Fuchsian subgroup of $\PSL_2(\bR)$.
	Then $\rho$ is conjugate to a Fuchsian subgroup of $\PSL_2(\bR)$ by a $C^r$ diffeomorphism.
\end{thm}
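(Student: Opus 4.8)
The approach I would take is the one due to Ghys: use the hypothesis $r\ge 3$ to extract from the $C^r$ structure a $\rho$--invariant projective structure on $S^1$ (which is the same datum as a $\PSL_2(\bR)$--conjugacy), and then arrange for the conjugating map to be $C^r$. Fix the standard projective structure on $S^1=\mathbb{RP}^1$. For $f\in\Diff^r(S^1)$ the Schwarzian derivative $\mathcal S(f)=f'''/f'-\tfrac32\bigl(f''/f'\bigr)^2$ is a quadratic differential of class $C^{r-3}$, and $\mathcal S(f)=0$ precisely when $f$ is the restriction of an element of $\PSL_2(\bR)$. By the Schwarzian chain rule $\mathcal S(f\circ g)=g^{*}\mathcal S(f)+\mathcal S(g)$ (pullback of quadratic differentials), the assignment $\gamma\mapsto\mathcal S(\rho(\gamma))$ is a $1$--cocycle on $\pi_1(S)$ valued in the module $\mathcal Q$ of continuous quadratic differentials on $S^1$, $\pi_1(S)$ acting through $\rho$. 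The plan is: \textbf{(i)} show the class of this cocycle in $H^1(\pi_1(S),\mathcal Q)$ vanishes, i.e.\ there is $q\in\mathcal Q$ with $\mathcal S(\rho(\gamma))=\rho(\gamma)^{*}q-q$ for all $\gamma$; \textbf{(ii)} integrate $q$ to a conjugating map; \textbf{(iii)} bootstrap its regularity to $C^r$.

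Step (i) is the heart of the matter, and the step I expect to be the main obstacle. Using the given topological conjugacy, transport the $\pi_1(S)$--action on $S^1$ to the Fuchsian action on the boundary circle $\partial\mathbb H^2\cong\PSL_2(\bR)/P$; then $\mathcal Q$ is identified with the space of continuous sections of a homogeneous line bundle over $\partial\mathbb H^2$, namely the bundle of quadratic differentials, a ``negative weight'' bundle. One then invokes the vanishing of $H^1(\pi_1(S),\mathcal Q)$ for the cocompact surface group $\pi_1(S)$: in the smooth or $L^2$ setting this is a Matsushima--Murakami / Raghunathan--type vanishing theorem, and here one needs a version valid for merely continuous coefficients; heuristically, vanishing holds because the $\pi_1(S)$--action on this negative-weight bundle is suitably contracting. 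The real work is to set up the correct coefficient module, match it with the homogeneous bundle, and quote or re-prove the vanishing in this low regularity. Closedness of $S$ --- that is, cocompactness of $\pi_1(S)$ --- is essential here, and is what distinguishes this situation from, say, punctured surfaces.

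For step (ii), granting (i): solve the Hill equation $y''-\tfrac12 q\,y=0$ and projectivize two independent solutions to obtain a local diffeomorphism $D\colon\mathbb R\to\mathbb{RP}^1$. Its $\PSL_2(\bR)$--monodromy over one period must be trivial: the relation $\mathcal S(\rho(\gamma))=\rho(\gamma)^{*}q-q$ says precisely that the projective structure with Schwarzian $-q$ is $\rho$--invariant, and a $\rho$--invariant projective structure on $S^1$, $\rho$ being minimal (it is topologically conjugate to the minimal Fuchsian action), can only be of standard type (monodromy $1$, degree $1$). Hence $D$ descends to a diffeomorphism $\phi$ of $S^1$ with $\mathcal S(\phi)=-q$, and a short computation with the chain rule gives $\mathcal S(\phi\circ\rho(\gamma)\circ\phi^{-1})=0$; thus $\phi\rho(\gamma)\phi^{-1}\in\PSL_2(\bR)$ for every $\gamma$. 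Since $q\in C^{r-3}$, the solutions of the Hill equation are $C^{r-1}$, so a priori $\phi$ is only a $C^{r-1}$ diffeomorphism.

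It remains to carry out step (iii): upgrading $\phi$ from $C^{r-1}$ to $C^r$. Writing $A_\gamma:=\phi\rho(\gamma)\phi^{-1}\in\PSL_2(\bR)$ and differentiating the intertwining relation $\phi\circ\rho(\gamma)=A_\gamma\circ\phi$ yields the cohomological equation $\log\phi'\circ\rho(\gamma)-\log\phi'=\log A_\gamma'\circ\phi-\log\rho(\gamma)'$, whose right-hand side is of class $C^{r-1}$. Since $\rho$ is a minimal action by $C^r$ diffeomorphisms, a standard regularity result for such cohomological equations promotes $\log\phi'$, and hence $\phi$, to class $C^r$, which completes the proof. (For $r\in\{\infty,\omega\}$ steps (ii) and (iii) merge, since the Hill equation already produces a $C^\infty$, respectively $C^\omega$, conjugacy.)
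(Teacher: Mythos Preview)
The paper does not prove Theorem~\ref{thm:ghys93}; it is quoted as Ghys' differentiable rigidity of Fuchsian actions, with a citation to \cite{GhysIHES93}, and used as a black box in the deduction of Proposition~\ref{p:c3-2}. So there is no ``paper's own proof'' to compare against.

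What you have written is a reasonable outline of Ghys' original argument: interpret the Schwarzian derivative of the action as a $1$--cocycle in quadratic differentials, kill its cohomology class, integrate via the Hill equation to a projective conjugacy, and then bootstrap regularity. That is indeed the architecture of the proof. A few cautions if you intend to flesh this out rather than simply cite Ghys. First, your step~(i) is, as you acknowledge, the real content, and ``heuristically, vanishing holds because the action on this negative-weight bundle is contracting'' is not yet an argument; Ghys' treatment of the relevant $H^1$ vanishing is delicate and is where most of the work lies. Second, in step~(ii) the claim that the developing map has trivial monodromy and degree one needs more than minimality of $\rho$; one uses the topological conjugacy to the Fuchsian model to pin down the projective type. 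Third, in step~(iii) the cohomological equation you write is over a minimal but a priori non-hyperbolic $C^r$ action, and the ``standard regularity result'' you invoke (Liv\v{s}ic-type bootstrapping) requires justification in that setting. None of these are fatal --- they are exactly the issues Ghys handles --- but as written they are gaps in a self-contained proof. For the purposes of this paper, citing \cite{GhysIHES93} is what is intended.
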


Proposition~\ref{p:c3} is an immediate consequence of the following,
which in turn is an obvious corollary of Proposition~\ref{p:normalizer} and Theorem~\ref{thm:ghys93}.

\begin{prop}\label{p:c3-2}
Let $T$ be a hyperbolic fibered 3-manifold with a closed fiber $S$.
If an action
\[\rho\co \pi_1(T)=\pi_1(S)\rtimes\form{t}\to\Homeo_+(S^1)\] 
has the property that $\rho(\pi_1(S))$ is topologically conjugate to a Fuchsian subgroup of
$\PSL_2(\bR)$, then either $\rho(\pi_1(S))\not\le\Diff^3_+(S^1)$ or $\rho(t)$ is not absolutely continuous. 
\end{prop}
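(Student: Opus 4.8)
\textbf{Proof proposal for Proposition~\ref{p:c3-2}.}

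The plan is to argue by contradiction, assuming both that $\rho(\pi_1(S))$ lies in $\Diff^3_+(S^1)$ and that $\rho(t)$ is absolutely continuous, and then to use Proposition~\ref{p:normalizer} together with Theorem~\ref{thm:ghys93} to derive a contradiction with the hyperbolicity of $T$. First I would invoke Theorem~\ref{thm:ghys93}: since $\rho(\pi_1(S))$ is topologically conjugate to a Fuchsian group and (by assumption) consists of $C^3$ diffeomorphisms, there is a $C^3$ diffeomorphism $h$ of $S^1$ with $h\rho(\pi_1(S))h^{-1}=\Gamma\le\PSL_2(\bR)$ a Fuchsian group, which moreover is cocompact since $S$ is closed. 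Conjugating the whole representation by $h$, we may as well assume $\rho(\pi_1(S))=\Gamma$ outright, at the cost of replacing $\rho(t)$ by $h\rho(t)h^{-1}$, which remains absolutely continuous because $h$ and $h^{-1}$ are $C^3$ (in particular Lipschitz with Lipschitz inverse, hence they preserve the class of absolutely continuous homeomorphisms and of Lebesgue-null sets).

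Next I would exploit the semidirect product relation. Because $\pi_1(S)$ is normal in $\pi_1(T)$, the element $g:=\rho(t)$ normalizes $\rho(\pi_1(S))=\Gamma$ inside $\Homeo_+(S^1)$, and by the previous paragraph $g$ is absolutely continuous, so $g\in\Homeo^{\mathrm{ac}}(S^1)$. Proposition~\ref{p:normalizer} then applies: the normalizer of $\Gamma$ in $\Homeo^{\mathrm{ac}}(S^1)$ is a discrete subgroup of $\PSL_2(\bR)$ containing $\Gamma$ with finite index. In particular $g\in\PSL_2(\bR)$, and some power $g^m$ (with $m=[\,N(\Gamma):\Gamma\,]$, say) lies in $\Gamma$ itself. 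Translating back through the identification $\pi_1(T)\cong\pi_1(S)\rtimes\form{t}$, this says that the conjugation action of $t^m$ on $\pi_1(S)$ agrees with the conjugation action of some element of $\pi_1(S)$, i.e.\ the outer automorphism $\psi^m\in\Out(\pi_1(S))$ is trivial. But $T$ is a hyperbolic fibered $3$--manifold, so its monodromy $\psi$ is pseudo-Anosov, hence has infinite order in $\Out(\pi_1(S))=\Mod(S)$; this is the desired contradiction.

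The one point that requires a little care, and which I regard as the main (though still mild) obstacle, is the bookkeeping in the conjugation step: one must make sure that conjugating by the $C^3$ diffeomorphism $h$ supplied by Theorem~\ref{thm:ghys93} genuinely preserves absolute continuity of $\rho(t)$, since the hypothesis on $\rho(t)$ is only topological/measure-theoretic while $h$ is obtained from a differentiable rigidity statement. This is harmless — a $C^1$ diffeomorphism of a compact manifold is bi-Lipschitz, and bi-Lipschitz maps preserve the $\sigma$-ideal of Lebesgue-null sets and the class of absolutely continuous homeomorphisms — but it should be stated explicitly so that the two hypotheses of the proposition, one about $\Diff^3$ and one about mere absolute continuity, are combined correctly. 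Once that is in place, the chain Theorem~\ref{thm:ghys93} $\Rightarrow$ reduce to a Fuchsian $\Gamma$ $\Rightarrow$ Proposition~\ref{p:normalizer} $\Rightarrow$ $\rho(t)\in\PSL_2(\bR)$ with $\psi$ of finite order $\Rightarrow$ contradiction with pseudo-Anosov monodromy closes the argument immediately.
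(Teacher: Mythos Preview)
Your proof is correct and follows exactly the route the paper indicates: it states that Proposition~\ref{p:c3-2} is ``an obvious corollary of Proposition~\ref{p:normalizer} and Theorem~\ref{thm:ghys93}'' without giving further details, and your argument supplies precisely those details (Ghys rigidity to reduce to a genuine Fuchsian group, then the normalizer lemma to force $\rho(t)$ into $\PSL_2(\bR)$ with a power in $\Gamma$, contradicting the infinite order of the pseudo-Anosov monodromy in $\Out(\pi_1(S))$). The observation that conjugation by the $C^3$ diffeomorphism $h$ preserves absolute continuity is the only point needing a word of justification, and you handle it correctly.
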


We remark that universal circle actions enjoy a strong $C^0$ rigidity property, namely that actions in the same connected component
of the representation variety of $\pi_1(T)\to\Homeo_+(S^1)$ are semi-conjugate
 to the standard action~\cite{BM2019}.

\subsection{Analytic actions}\label{ss:analytic}

Finally, we discuss faithful analytic actions of fibered $3$--manifold groups on $S^1$. By Agol's resolution of the virtual fibering
conjecture~\cite{Agol2013}, we have that every hyperbolic $3$--manifold virtually fibers over the circle. Thus, if a subgroup
$\Gamma<\mathrm{PSL}_2(\bC)$ is discrete (i.e.\ a Kleinian group) with finite covolume, then $\Gamma$ has a finite index subgroup which
is $\pi_1(T)$ for some fibered $3$--manifold $T$. Now, if the matrix entries of $\Gamma$ are contained in a number field $K\supset\bQ$
such that $K$ has a real
place (i.e.\ a Galois embedding $\sigma\colon K\to\bC$ such that $\sigma(K)\sse\bR$), then $\Gamma$ can be identified
with a subgroup of $\PSL_2(\bR)$.

Therefore, in order to establish Proposition~\ref{p:analytic}, it suffices to produce such a Kleinian group. If $\Gamma$ has matrix entries
in a field $K$ of odd degree over $\bQ$ then $K$ has at least one real place, since the number of complex places is even.
Many such arithmetic Kleinian groups of finite covolume (and even cocompact ones such as the fundamental group of the
Weeks  manifold) exist; see \cite[Section 13.7]{MR2003book}, for example. Note that since  any discrete  subgroup of $\PSL_2(\bR)$ is
virtually free or a closed surface group, a finite volume hyperbolic $3$--manifold group cannot occur as a discrete subgroup of $\PSL_2(\bR)$.

\subsection{Questions}

There are several natural questions which arise from the discussion in this paper.

\begin{que}[J. Souto]
Let $T$ be a fibered $3$--manifold and let $G=\pi_1(T)$. Is there a finite index subgroup $G_0<G$ such that $G_0<\Diff^2(I)$? What about
$G_0<\Diff^{\infty}(I)$?
\end{que}

In~\cite{MS2018}, Marquis and Souto constructed a faithful $C^{\infty}$ action of closed orientable surface groups, for genus $g\geq 2$, on the unit
interval.

\begin{que}\label{q:uni-circle}
Is the universal circle action of a fibered $3$--manifold group topologically conjugate to a $C^1$ action?
\end{que}

In other words, Question~\ref{q:uni-circle} asks if we can replace the $C^3$ conclusion in Proposition~\ref{p:c3} with a $C^1$ conclusion. Observe that Ghys' differentiable rigidity of Fuchsian actions does not hold in lower regularity (at least less than $C^2$): for arbitrary $\alpha<1$, there are $C^{1+\alpha}$ actions of $\pi_1(S)$  that are
$C^0$ conjugate to a Fuchsian action, but that are not conjugate to a Fuchsian action by an absolutely continuous homeomorphism; see \cite{HurderKatok}. Other instances of this phenomenon arise from the theory of Hitchin representations~\cite{MR3749423}. A first attempt to answer Question~\ref{q:uni-circle} would be to investigate if the analogue of Proposition \ref{p:normalizer} holds for these actions: do they admit a $C^1$ normalizer which is not a finite extension of the image of $\pi_1(S)$?

\section*{Acknowledgements}
The authors thank Nicolas Tholozan and Kathryn Mann for useful comments.
CB and MT are partially supported by the project ANR Gromeov (ANR-19-CE40-0007).
ShK is supported by Samsung Science and Technology Foundation (SSTF-BA1301-51) and by a KIAS Individual Grant (MG073601) at Korea Institute for Advanced Study. 
TK is partially supported by an Alfred P. Sloan Foundation Research Fellowship, and by NSF Grant DMS-1711488.
MT is partially supported by PEPS -- Jeunes Chercheur-e-s --  2019 (CNRS),  the project ANER Agroupes (AAP 2019 R\'egion Bourgogne--Franche--Comt\'e), and the project Jeunes
G\'eom\'etres of F. Labourie (financed by the Louis D. Foundation).
ShK and MT thank the organizers of the meeting \emph{International Conference On Dynamical Systems} at SUSTech in 2018, where this project was initiated.
TK and MT are grateful to the Korea Institute for Advanced Study for its hospitality while part of this research was completed.


\bibliographystyle{amsplain}
\bibliography{ref}

\end{document}